\pgfplotsset{compat=newest}
\numberwithin{equation}{section}
\newcommand{\roundPrecision}{2}
\DeclareMathOperator*{\argmin}{arg\,min}
\renewcommand{\xi}{\gamma}
\newcommand{\up}{u_\pi}
\newcommand{\uI}{u_I}
\newcommand{\ff}{\rm f}
\newcommand{\PP}{E}
\renewcommand{\P}{\mathbb P}
\newtheorem{theorem}{Theorem}[section]
\newtheorem{lemma}[theorem]{Lemma}
\newtheorem{remark}[theorem]{Remark}
\newtheorem{corollary}[theorem]{Corollary}
\subjclass{}
\keywords{Virtual elements; curved faces; non-homogeneous mixed boundary conditions.}
\thanks{{DP has been partially funded by the project DIT.AD021.185.001 - Metodi analitici e numerici per equazioni differenziali. All the authors are also members of the Gruppo Nazionale Calcolo Scientifico-Istituto Nazionale di Alta Matematica (GNCS-INdAM)}}
\begin{document}

\markboth{Daniele Prada, Franco Brezzi, L. Donatella Marini}{A Virtual Element Method on polyhedra with curved faces}

%
%




\title[A Virtual Element Method on polyhedra with curved faces]{A Virtual Element Method\\ on polyhedra with curved faces}


\author[D. Prada]{Daniele Prada} 

\address{IMATI del CNR, Pavia, (Italy)}
\email{prada@imati.cnr.it}

\author[F. Brezzi]{Franco Brezzi}
\address{IUSS, and IMATI del CNR, Pavia (Italy)}
\email{brezzi@imati.cnr.it}

\author[L. D. Marini]{L. Donatella Marini}
\address{Dipartimento di Matematica, Universit\`a di Pavia,
and IMATI del CNR, Pavia, (Italy)}
\email{marini@imati.cnr.it}

\maketitle


\begin{abstract}
  In this paper we construct conforming Virtual Element approximations on domains with curved boundary and/or internal curved interfaces, both in two and three dimensions.
  Our approach allows to impose both Dirichlet and Neumann non-homogeneous boundary conditions, and provides, for degree of accuracy $k \geq 1$, optimal convergence rates.
  Whenever the exact solution is a polynomial of degree $k$, local spaces of degree $k$ ensure satisfaction of the patch test.
  The proposed method is theoretically analyzed in the two-dimensional case, whereas it is numerically validated both in two and three dimensions.
\end{abstract}











\section{Introduction}

The Virtual Element Method (VEM) can be viewed as an extension of the classical Finite Element Method (FEM) to general polytopal tessellations. Since their introduction in \cite{basic_vem}, Virtual Elements have shown an increasing interest for their ductility and the possibility to create conforming approximations on very general polygonal/polyhedral decompositions. The method has been applied to many different applications. It would be difficult, if not impossible, to give credit to all the papers appeared during the years. We refer to \cite{Acta-VEM} for an (almost) exhaustive bibliography. Not much however has been done for dealing with curved edges/surfaces. Nonconforming approximations, as in nonconforming VEM and HHO  methods (see \cite{nonconforming, HHO-curved}), are relatively easy, since the traces of functions do not need to be continuous at the curved interfaces. Conforming VEM on curved edges, given in parametric form, were first introduced in \cite{beirao_curvo_2019}, using, on the curved edges, functions that are polynomials in the parameter. The approach is simple and effective, but the VEM local spaces do not contain polynomials of a given degree, so that the {\it patch test} is not satisfied. Another approach was presented in  \cite{beirao2020}, with the advantage that the local spaces contain all polynomials of a given degree, and the result does not depend on the choice of parametrization. The disadvantage is the need of introducing additional degrees of freedom and the difficulty of programming the method. A similar approach was presented independently in \cite{anand2020}.
VEM have also been combined with boundary correction techniques, such as the ones underlying the shifted boundary method~\cite{atallah2020}, to deal with curved boundaries~\cite{bertoluzza2022, bertoluzza2024}. The extension of the previous technologies to curved internal interfaces and Neumann boundary conditions is still an area of active research.

We propose a conforming approach for two and three dimensional problems, which allows to handle internal curved interfaces and mixed Dirichlet/Neumann non-homogeneous boundary conditions, and ensures optimal convergence rates. Moreover, whenever the exact solution is a polynomial of degree $k$, the solution of the discretized problem of order $k$ will coincide with the exact solution, thus ensuring the patch test. 
We use the Poisson problem as an example in order to explain the construction of the numerical method. Surely, it would be particularly interesting to extend it to more complex problems, such as: linear elasticity problems; incompressible and nearly incompressible materials, contact problems, electromagnetic problems, mesh adaptation for dealing with singularities, and so forth.

The paper is organized as follows. In Sec.~\ref{sec:continuous} we present the model problem. In Sec.~\ref{sec:localVEM2d}, the main part of the paper, we introduce the two-dimensional local spaces to be used in a (possibly curved) polygonal mesh element. In Sec.~\ref{sec:discrete2d} we write the discretized problem and provide an error estimate in the $H^1$ norm. The extension to the three-dimensional case is presented in Sec.~\ref{sec:VEM3d}. Finally, in Sec.~\ref{sec:numerical} we present some numerical tests, both in two and three dimensions.

\section{The Continuous Problem}\label{sec:continuous}

\subsection{Notation}

\noindent
For a nonnegative integer $k$, $\mathbb P_{k,d}$ is the space of polynomials of degree $\leq k$ in $d$ dimensions, $d = 2, 3$. By convention, $\mathbb P_{-1,d} = \{0\}$. Given $\mathcal O \subset \mathbb R^d$, $\mathbb P_k(\mathcal O)$ is the restriction of $\mathbb P_{k,d}$ to $\mathcal O$. We denote by
\begin{equation}\label{poly-hom}
{\mathbb P}^{\rm hom}_s(\mathcal O):=\{\mbox{homogeneous polynomials of degree}\, s \mbox{ in the variable } (x_i-x_{i,\mathcal O})/h_{\mathcal O} \},
\end{equation}
where $x_{i,\mathcal O}$ are the coordinates of the centroid of $\mathcal O$ and $h_{\mathcal O}$ is the diameter of $\mathcal O$.

\subsection{The continuous problem}

\noindent
Let $\Omega$ be a bounded open subset of $\mathbb R^d, d = 2,3$, with Lipschitz continuous boundary $\Gamma$. We assume $\Gamma = \overline\Gamma_D\cup\overline\Gamma_N$, with $|\Gamma_D| > 0$, $\mathring\Gamma_D\cap\mathring\Gamma_N = \emptyset$. 

We consider the following model problem
\begin{equation}\label{eq:model}
\begin{cases}
\text{find } u \in H^1(\Omega) \text{ such that:}\\
-\nabla\cdot(\kappa\nabla u) = f\quad\text{in }\Omega,\\
u = g_D\text{ on }\Gamma_D, \quad (\kappa\nabla u)\cdot\mathbf n = g_N\text{ on }\Gamma_N,
\end{cases}
\end{equation}
where $\kappa$ is a positive piecewise constant function in $\Omega$,  
$f, g_D, g_N$ are given functions with, say, $f\in L^2(\Omega), g_D = H^{1/2}(\Gamma_D)$ and $g_N = L^2(\Gamma_N)$.

By defining, for $\phi\in H^{1/2}(\Gamma_D)$,
\[
H^1_{\phi,\Gamma_D}(\Omega) = \Set{v \in H^1(\Omega) | v = \phi \text{ on }\Gamma_D},
\]
the variational formulation of~\eqref{eq:model} reads
\begin{equation}\label{eq:model-weak}
\begin{cases}
\text{find } u \in H^1_{g_D,\Gamma_D}(\Omega) \text{ such that}\\
a(u,v) = (f,v) + \langle g_N, v\rangle\quad\forall\,v \in H^1_{0,\Gamma_D}(\Omega),
\end{cases}
\end{equation}
with
\begin{equation}\label{eq:model-weak-symbols}
a(u,v) = \int_\Omega\kappa\nabla u\cdot\nabla v\,dx,\quad(f,v) = \int_\Omega f\,v\,dx,\quad\langle g_N, v\rangle = \int_{\Gamma_N}g_N\,v\,d\Gamma.
\end{equation}


\section{Local Virtual Element Spaces in Two Dimensions}\label{sec:localVEM2d}

\noindent
Let $\PP$ be a bounded polygon with diameter $h_{\PP}$. Without loss of generality, we assume that $\PP$ has at most one edge that will be treated as ``curved'' (from a geometrical point of view, it could be either curved or straight or almost straight). All the other edges are assumed to be straight. This condition is mandatory for the approach followed in this work.

In order to define local virtual element spaces for the discretization of~\eqref{eq:model-weak}, we have to distinguish between different types of polygons:
\begin{itemize}
    \item Polygons with straight edges.
    \item Polygons with a curved edge shared with another polygon. In this case, we will follow a master and slave approach (only one element sets the degrees of freedom to be used on the curved edge).
    \item Polygons with a curved edge that belongs entirely to $\overline\Gamma_D$.
    \item Polygons with a curved edge that belongs entirely to $\overline\Gamma_N$.
\end{itemize}

\noindent
We assume that each polygon $\PP$ verifies the following conditions:
\begin{enumerate}[label=(A\arabic*)]
    \item\label{mesh2D-C1} the boundary of $\PP$ is piecewise $C^1$,
      \item\label{mesh2D-star} $\PP$ is star-shaped with respect to  every point of a disk $D$ of radius $\rho^{\PP} h_{\PP}$, with $\rho^{\PP} \geq\rho_0 > 0$,
     \item\label{mesh2D-Nedge} every edge $e$ of $\PP$ has lenght $|e|\ge \rho^{\PP} h_{\PP}$.
%
   \end{enumerate}
The above regularity assumptions can be relaxed~\cite{beirao2017,brenner2017,brenner2018,beirao_curvo_2019}.

We will use the notation $A \lesssim B$ to represent the inequality $A \leq (\text{constant})B$, where the positive constant depends only on $k$ (possibly on $\rho_0$) and that may change at each occurrence. The notation $A \approx B$ is equivalent to $A \lesssim B$ and $A \gtrsim B$.

\subsection{Subspaces on polygons with straight edges}\label{subsec:VkP2d_straight}

\noindent
Let us recall the local spaces as introduced in~\cite{ahmad2013} (see also \cite{Acta-VEM}). Given an integer $k \geq 1$, we let
\begin{equation}\label{eq:VkP2d_straight}
 {\widetilde V}^k(\PP) := \{v \in H^1(\PP) \cap C^0(\overline\PP) : v_{|e} \in \mathbb P_k(e)\,\forall\text{ edge }e\subset\partial\PP, \,\Delta v \in\mathbb P_{k}(\PP)\}.
\end{equation}
The following linear operators are well defined in ${\widetilde V}^k(\PP)$:
\begin{equation}\label{eq:VkP2d_straight_dofs}
    \begin{aligned}
    &\bullet\,\,\text{the values of $v$ at each vertex of $\PP$,}\\
    &\bullet\,\,\text{(for $k\geq 2$) the values of $v$ at the $k-1$ Gauss-Lobatto points}\\
    &\qquad \text{of each edge $e\subset\partial\PP$,}\\
    &\bullet\,\,\text{ (for $k \geq 2$) the moments ${\displaystyle\frac{1}{|\PP|}\int_{\PP}v p_{\alpha}\,dx}\quad\forall p_{\alpha}\in\mathbb P_{k-2}(\PP)$}.\\
    \end{aligned}
\end{equation}
Here, denoting by $ \alpha= (\alpha_1, \alpha_2)\in\mathbb N^2$ a multi-index, we use
\begin{equation}\label{eq:Pk2d}
p_\alpha(x,y) = \left(\frac{x-x_{\PP}}{h_{\PP}}\right)^{\alpha_1}\left(\frac{y-y_{\PP}}{h_{\PP}}\right)^{\alpha_2}\
\quad\alpha_1+\alpha_2 {\color{black}\leq k-2},
\end{equation}
where $(x_{\PP}, y_{\PP})$ is the centroid of $\PP$, and $h_\PP$ is the diameter of $\PP$. Note that $\|p_\alpha\|_{L^\infty(\PP)} \leq 1$.

\noindent
The operators \eqref{eq:VkP2d_straight_dofs} allow to compute useful projections onto polynomials. In particular,  the operator $\Pi^\nabla_{k,\PP}: \, H^1(\PP) \,\longrightarrow \, \mathbb P_k(\PP)$ defined by
    \begin{equation}\label{eq:PiNablaP}
\int_{\PP}\nabla(\Pi^\nabla_{k,\PP} v-v)\cdot \nabla q\, dx=0\quad \int_{\partial\PP}(\Pi^\nabla_{k,\PP} v-v)\, ds=0
\qquad\forall q\in \mathbb P_k(\PP)
\end{equation}
is computable, and it allows to define the final local spaces as:
\begin{equation}\label{eq:VkP2d_straight-final}
V^k(\PP)=\{ v \in {\widetilde V}^k(\PP) | \int_{\PP} v\,q_s \,dx = \int_{\PP}\Pi^\nabla_{k,\PP} v\,q_s \,dx\quad \forall q_s\in {\mathbb P}^{\rm hom}_s(\PP), \,s=k-1,k\}.
\end{equation}


\noindent
Other useful projections that can be computed in ${V}^k(\PP)$ are:
\begin{description}
 \item[\it $L^2$ projection.] $\Pi^0_{k,\PP}: L^2(\PP) \, \longrightarrow \,\mathbb P_k(\PP)$, defined by
        \begin{equation}\label{eq:Pi0P}
        \int_{\PP}\Pi^0_{k,\PP}v\,q\,dx = \int_{\PP}v\, q\,dx\quad\forall\,q\in\mathbb P_k(\PP).
    \end{equation}
  \item[\it $L^2$ projection of the gradient.] $\Pi^0_{k-1,\PP}:  H^1(\PP)  \longrightarrow [\mathbb P_{k-1}(\PP)]^2$, defined by
    \begin{equation}\label{eq:PiGradP}
        \int_{\PP}(\Pi^0_{k-1,\PP}\nabla v)\cdot\mathbf q\,dx = \int_{\PP}\nabla v\cdot\mathbf q\,dx\quad\forall\,\mathbf q\in[\mathbb P_{k-1}(\PP)]^2.
    \end{equation}
\end{description}
Clearly, for any $q\in\mathbb P_k(\PP)$, we have:
\begin{equation}\label{projections}
\Pi^\nabla_{k,\PP}\,q = q,\quad \Pi^0_{k-1,\PP}\nabla q = \nabla q, \quad \Pi^0_{k,\PP}\,q = q.
\end{equation}

\subsection{Subspaces on curved polygons declared as \emph{master}}
\label{subsec:VkP2d_master}

\noindent
Let $k \geq 1$ be a given integer. Given two polygons sharing a curved edge $\xi$, we arbitrarily choose one of them to set the degrees of freedom on $\xi$ and we name it \emph{master} polygon. Let $\PP$ be a given master polygon. In order to define a space of functions like~\eqref{eq:VkP2d_straight-final}, we need to define them on $\xi$. Let us start by considering the data we have (candidates to become degrees of freedom):
\begin{equation}\label{eq:VkP2d_master_dofs}
    \begin{aligned}
        &\bullet\,\,\text{the values of $v$ at the vertices of $\PP$},\\
        &\bullet\,\,\text{(for $k \geq 2$) the values of $v$ at the $k-1$ Gauss-Lobatto points}\\
        &\qquad\text{of each straight edge $e\subset\partial\PP$},\\
        &\bullet\,\,\text{(for $k \geq 2$) ${\displaystyle\frac{1}{|\PP|}\int_{\PP}v p_{k-2}\,dx}\quad\forall p_{k-2}\in\mathbb P_{k-2}(\PP)$}.
    \end{aligned}
\end{equation}
\begin{lemma}
If $\PP$ is a triangle-like element, i.e., with two straight edges and one curved, conditions \eqref{eq:VkP2d_master_dofs} identify a unique polynomial $p_k$ in $\mathbb P_k(\PP)$. 
\end{lemma}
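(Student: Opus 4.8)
The plan is to show that the linear map
\[
L:\mathbb P_k(\PP)\longrightarrow\mathbb R^N,\qquad N:=\dim\mathbb P_k(\PP)=\binom{k+2}{2},
\]
sending a polynomial to the list of values and moments in \eqref{eq:VkP2d_master_dofs}, is an isomorphism. First I would check that the number of functionals matches $N$: the three vertices give $3$ conditions, the two straight edges carry $k-1$ interior Gauss--Lobatto nodes each, for $2(k-1)$ conditions, and the interior moments against $\mathbb P_{k-2}(\PP)$ add $\dim\mathbb P_{k-2,2}=\binom{k}{2}$ conditions; summing gives $3+2(k-1)+\binom{k}{2}=\binom{k+2}{2}=N$ (with the convention $\mathbb P_{-1}=\{0\}$ covering the case $k=1$). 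Since domain and codomain then have equal finite dimension, it suffices to prove that $L$ is injective, i.e. that the only $p\in\mathbb P_k(\PP)$ annihilated by all the functionals is $p\equiv 0$.

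For the injectivity, let $p\in\mathbb P_k(\PP)$ lie in $\ker L$. On each straight edge $e_i$ ($i=1,2$) the restriction $p_{|e_i}$ is a univariate polynomial of degree $\le k$ in the arc-length parameter; it vanishes at the two endpoints (vertices) and at the $k-1$ interior Gauss--Lobatto nodes, hence at $k+1$ distinct points, so $p_{|e_i}\equiv 0$. Writing $\ell_i$ for the affine function whose zero set is the line supporting $e_i$, the fact that $p$ vanishes along a segment of $\{\ell_i=0\}$ forces $\ell_i\mid p$. The two straight edges of a triangle-like element meet at their common vertex and are not collinear, so $\ell_1,\ell_2$ are coprime and therefore $\ell_1\ell_2\mid p$; thus $p=\ell_1\ell_2\,r$ with $r\in\mathbb P_{k-2}(\PP)$.

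It remains to show $r\equiv 0$. For $k=1$ this is immediate, since $r\in\mathbb P_{-1}=\{0\}$ (equivalently, $\ell_1\ell_2$ cannot divide a nonzero polynomial of degree $1$). For $k\ge 2$ I would test the moment conditions with $q=r\in\mathbb P_{k-2}(\PP)$, obtaining
\[
0=\int_{\PP}p\,r\,dx=\int_{\PP}\ell_1\ell_2\,r^2\,dx .
\]
If $\ell_1\ell_2$ keeps a constant sign on $\PP$, the integrand has a constant sign, so the integral can vanish only if $r\equiv0$; then $p=\ell_1\ell_2 r\equiv 0$, establishing injectivity and hence the claim.

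The main obstacle is precisely the constant-sign property of $\ell_1\ell_2$ on $\PP$. Normalizing $\ell_1,\ell_2$ to be positive on the interior near the common vertex, the product is positive throughout the angular wedge with that apex bounded by the two supporting lines; what one has to justify is that the triangle-like element is contained in this wedge, i.e. that neither supporting line re-enters $\mathring\PP$ away from its edge and that the curved edge stays on the correct side. This is where the triangle-like geometry together with assumptions \ref{mesh2D-star} and \ref{mesh2D-Nedge} enters, and it is the step I would treat most carefully, since without it the symmetric bilinear form $(r,q)\mapsto\int_{\PP}\ell_1\ell_2\,r\,q\,dx$ on $\mathbb P_{k-2}(\PP)$ could degenerate and uniqueness could fail.
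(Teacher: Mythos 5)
Your proposal is correct and takes essentially the same route as the paper's proof: equate the number of conditions with $\dim\mathbb P_{k,2}$, deduce vanishing on the two straight edges and factor out the product $\lambda_1\lambda_2$ of their equations, then test the interior moments against the quotient polynomial and conclude using that $\lambda_1\lambda_2$ has constant sign on the element. The one point you flag as delicate---that the element stays inside the wedge where $\lambda_1\lambda_2$ does not change sign---is precisely the step the paper asserts without further justification (``since the product $\lambda_1\lambda_2$ does not change sign''), so your treatment is simply a more explicit version of the same argument.
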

\begin{proof}
The number of conditions equals the dimension of $\mathbb P_k(\PP)$. Indeed,
\[
3 + 2(k-1) + k(k-1)/2 = (k+1)(k+2)/2 \equiv \textit{dim}\,\mathbb P_k(\PP).
\]
Hence, we have to prove that, if $p_k$ has  vanishing conditions, then it is identically zero. The first two conditions imply that $p_k$ vanishes on the two straight edges. Therefore, if $\lambda_1, \lambda_2$ are the equations of the two edges,
$$
p_k= \lambda_1 \lambda_2 q_{k-2} \qquad \mbox{for some } q_{k-2} \in \mathbb P_{k-2}.
$$
From the third set of conditions we have
$$
\int_{\PP} \lambda_1 \lambda_2 (q_{k-2})^2 \,dx = 0,
$$
and since the product $ \lambda_1 \lambda_2$ does not change sign in $\PP$ we deduce $p_k\equiv 0$.
\end{proof}
With a given virtual function $v$, we can then associate a polynomial $p_k^*\in\mathbb P_k(\PP)$ computed through the conditions:
\begin{equation}\label{eq:lsq2d_tri}
    \begin{cases}
        p_k^* = v \text{ at the vertices of $\PP$},\\
        \text{(for $k \geq 2$)\quad $p_k^* = v$ at the $k-1$ Gauss-Lobatto points}\\
        \text{\qquad $\forall$\,straight edge $e\subset\partial\PP$},\\
        \text{(for $k \geq 2$)\quad${\displaystyle\int_{\PP}p_k^* p_{k-2}\,dx} =\displaystyle{\int_{\PP}v p_{k-2}\,dx}\quad\forall p_{k-2}\in\mathbb P_{k-2}(\PP)$}.
    \end{cases}
\end{equation}
Then we can take
\[
v_{|\xi} = {p_k^*}_{|\xi}.
\]
Note that, here, $v \equiv p_k^*$ on the whole element $\PP$. By construction, global continuity is guaranteed.

For a general polygon with a number of straight edges bigger than 2, the number of conditions~\eqref{eq:VkP2d_master_dofs} is bigger than the dimension of $\mathbb P_k(\PP)$. In this case we can use a least-square approach, as done in \cite{curved_bm},  keeping the values at the two endpoints of $\xi$ fixed to guarantee global continuity. Specifically, denoting by $N$ the
number of conditions~\eqref{eq:VkP2d_master_dofs}, ordered in such a way that the values at the two endpoints of $\xi$ are the last two, we look for $p_k^*\in\mathbb P_k(\PP)$ such that:
\begin{equation}\label{eq:lsq2d}
    \begin{cases}
        \displaystyle p_k^* = \argmin_{p_k\in\mathbb P_k(\PP)}\sum_{i=1}^{N-2}(\text{dof}_i(p_k)-\text{dof}_i(v))^2\\
        \text{and }p_k^* = v\text{ at the two endpoints of }\xi.
    \end{cases}
\end{equation}
Then, as before, we set
\begin{equation}\label{eq:lsq2d_sol}
    v_{|\xi} = {p_k^*}_{|\xi}.
\end{equation}

Once the function $v$ is individuated through conditions \eqref{eq:VkP2d_master_dofs} and \eqref{eq:lsq2d_sol}, we can compute $\Pi^\nabla_{k,\PP} v$. Mimicking what we did in Section \ref{subsec:VkP2d_straight} we then introduce the space
\begin{multline}\label{eq:VkP2d_master}
    {\widetilde V}^k(\PP) := \{\,v \in H^1(\PP)\cap\,C^0(\overline\PP) : v_{|e} \in \mathbb P_k(e)\,\,\forall\text{ straight edge }e\subset\partial\PP,\\
    v_{|\xi} = {p_k^*}_{|\xi},  \,\Delta v \in\mathbb P_{k}(\PP)\,\},
\end{multline}
and then
\begin{equation}\label{eq:VkP2d_master-final}
V^k(\PP)=\{ v \in {\widetilde V}^k(\PP) : \int_{\PP} v\, q_s \,dx = \int_{\PP}\Pi^\nabla_{k,\PP} v\, q_s \,dx\quad \forall q_s\in {\mathbb P}^{\rm hom}_s(\PP), \,s=k-1,k\}.
\end{equation}
We point out that, for every $k \geq 1$,
\begin{equation}\label{eq:patch-test-2d}
    \mathbb P_k(\PP) \subseteq V^k(\PP).
\end{equation}
\begin{remark}\label{alternative-procedure}
Another strategy could be used, more in the spirit of Mimetic Finite Differences: instead of defining the trace of $v$ on $\xi$, we compute its moments. More precisely,  we look for $p_k^*\in\mathbb P_k(\PP)$ such that:
\begin{equation}\label{eq:lsq2d-mim}
        \displaystyle p_k^* = \argmin_{p_k\in\mathbb P_k(\PP)}\sum_{i=1}^{N}(\text{dof}_i(p_k)-\text{dof}_i(v))^2.
\end{equation}
A function $v$ will be individuated by conditions \eqref{eq:VkP2d_master_dofs} plus the moments of 
$p_k^*$ of order up to  $k$ on $\xi$:
\begin{equation}\label{mimetic-2D}
\int_{\xi} v q_{k} \,d\xi = \int_{\xi}  p_k^* q_{k} \,d\xi \quad \forall q_{k} \in \mathbb P_{k}(\PP).
\end{equation}
As we shall see, this approach will be considered in the 3D case. Here we will stick on the previous approach, easy to deal with for the theoretical analysis.
\end{remark}
%
\subsection{Subspaces on curved polygons declared as \emph{slave}}\label{subsec:VkP2d_slave}

\noindent
Let $\PP'$ be a slave element, which is separated by the corresponding master element $\PP$ by the curved edge $\xi$. The space $V^k(\PP')$ is defined as in~\eqref{eq:VkP2d_master}-\eqref{eq:VkP2d_master-final}, but care is needed when defining the corresponding basis functions, since the behavior on $\xi$ is determined by the master polygon $\PP$. We proceed in a way similar to \cite{beirao2020}; in \cite{beirao2020} a ficticious set of generating points and a corresponding set of basis functions was introduced to define traces on the curved edge. Here the set of generating points is naturally provided by the dofs \eqref{eq:VkP2d_master_dofs} on the master element, and the generators are the corresponding basis functions. Precisely, let $N_{\PP}$ be the dimension of $V^k(\PP)$, and let $\{\varphi_i\},\,i=1,\cdots, N_{\PP}$ be the corresponding canonical basis functions defined as $\mbox{dof}_i(\varphi_j)=\delta_{ij}$, and with trace on $\xi$ computed as in \eqref{eq:lsq2d}-\eqref{eq:lsq2d_sol}. Let $N_{\PP'}$ be the number of dofs \eqref{eq:VkP2d_master_dofs} of $\PP'$, and let $\{{\widetilde \varphi}_i\},\,i=1,\cdots, N_{\PP'}-2$ be a subset of basis functions in $V^k(\PP')$ associated with the dofs not positioned on $\xi$, defined as
\begin{equation}\label{basis-slave-zero}
\mbox{ dof}_i({\widetilde\varphi}_j)=\delta_{ij}\quad \mbox{and } {{\widetilde\varphi}_i}= 0 \mbox{ on }\xi\quad i,j=1,\cdots,N_{\PP'}-2.
\end{equation}
The basis functions for the space $V^k(\PP')$ will be:
\begin{itemize}
\item the $N_{\PP}-2$ basis functions $\varphi_i$ in $V^k(\PP)$, extended to $\PP'$ as
\begin{equation}\label{eq:basis-slave-from-master}
\varphi_i= 0 \mbox{ on the straight edges of } {\PP'},\mbox{ and }\int_{\PP'} \varphi_i q= 0 \quad \forall q \in {\mathbb P}_{k-2}(\PP');
\end{equation}
\item the two basis functions associated with the endpoints of $\xi$, extended to $\PP'$ as:
\begin{align*}
&\varphi_i =0 \mbox{ on the vertices of }{\PP'} \not\in\xi,\\
&\varphi_i =0 \mbox{ on the }k-1 \mbox{ Gauss-Lobatto points of the straight edges of  }\PP',\\
&\int_{\PP'} \varphi_i q= 0 \quad \forall q \in {\mathbb P}_{k-2}(\PP');
\end{align*}
\item the $N_{\PP'}-2$ functions ${\widetilde\varphi}_i$ defined in \eqref{basis-slave-zero}, extended to zero in ${\PP}$.
\end{itemize}

\noindent
Also in this case, for every $k \geq 1$,
\begin{equation*}
    \mathbb P_k(\PP') \subseteq V^k(\PP').
\end{equation*}

\subsection{Subspaces on polygons with a curved edge in $\Gamma_D$}\label{subsec:VkP2d_diri}

\noindent
On elements $\PP$ with one curved edge $\xi\subset\Gamma_D$ the situation is simple. Since the trace is assigned, our local spaces will be:
\begin{multline}\label{eq:VkP2d_diri}
    {\widetilde V}^k_{g_D}(\PP) := \{\,v \in H^1(\PP) \cap\,C^0(\overline\PP)  :v_{|e} \in \mathbb P_k(e)\,\,\forall\text{ straight edge }e\subset\partial\PP,\\
    v_{|\xi} = g_D,  \,\Delta v \in\mathbb P_{k}(\PP)\,\},
\end{multline}
and then
\begin{equation}\label{eq:VkP2d_diri-final}
V^k_{g_D}(\PP)=\{ v \in {\widetilde V}^k_{g_D}(\PP) | \int_{\PP} v q_s \,dx = \int_{\PP}\Pi^\nabla_{k,\PP} v q_s \,dx\quad \forall q_s\in {\mathbb P}^{\rm hom}_s(\PP), \,s=k-1,k\}.
\end{equation}
An element of $V^k_{g_D}(\PP)$ is fully determined by the knowledge of $g_D$ and the set of degrees of freedom \eqref{eq:VkP2d_master_dofs}. For a general $g_D$, $\mathbb P_k(\PP) \not\subset V^k_{g_D}(\PP)$, but whenever $g_D$ is the trace of a polynomial $p_k\in\mathbb P_k$, then the condition $\mathbb P_k(\PP) \subset V^k_{g_D}(\PP)$ will hold.

\subsection{Subspaces on polygons with a curved edge in $\Gamma_N$}\label{subsec:VkP2d_Neumann}

On elements $\PP$ with one curved edge $\xi\subset\Gamma_N$, we can define local spaces analogous to~\eqref{eq:VkP2d_master-final}. In this case we need to compute integrals
\begin{equation}\label{int-Neu}
\int_{\xi} g_N v \, d \xi.
\end{equation}
This can be done since, thanks to \eqref{eq:lsq2d}-\eqref{eq:lsq2d_sol}, the trace of $v$ is a known polynomial on $\xi$. 

\begin{lemma}\label{lemma:2dcomputability-plain}
    For all types of elements defined in Sections~\ref{subsec:VkP2d_straight}, \ref{subsec:VkP2d_master}, \ref{subsec:VkP2d_slave}, and \ref{subsec:VkP2d_diri}, the degrees of freedom allow to compute exactly
    \begin{gather*}
    \Pi^\nabla_{k,\PP}\colon H^1(\PP)\to\mathbb P_k(\PP),\quad
    \Pi^0_{k-1,\PP}\nabla\colon H^1(\PP)\to[\mathbb P_{k-1}(\PP)]^2,\\
    \Pi^0_{k,\PP}\colon L^2(\PP)\to \mathbb P_{k}(\PP).
    \end{gather*}
\end{lemma}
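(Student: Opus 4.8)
The plan is to verify computability of each of the three projection operators separately, exploiting the defining relations \eqref{eq:PiNablaP}, \eqref{eq:Pi0P}, and \eqref{eq:PiGradP}, and showing that every quantity appearing on the right-hand sides is accessible from the degrees of freedom \eqref{eq:VkP2d_master_dofs}. The key observation, valid for all four element types, is that on each edge the trace of $v$ is an explicitly known polynomial: on straight edges $v_{|e}\in\mathbb P_k(e)$ is determined by its vertex values and Gauss--Lobatto values, while on the curved edge $\xi$ the trace equals ${p_k^*}_{|\xi}$ (respectively $g_D$ in the Dirichlet case), a known polynomial produced by the least-square reconstruction \eqref{eq:lsq2d}. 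Thus boundary integrals of $v$ against polynomials, and boundary integrals of $v$ against normal derivatives of polynomials, are all computable.

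First I would treat $\Pi^\nabla_{k,\PP}$. Integrating by parts in the first relation of \eqref{eq:PiNablaP} gives, for all $q\in\mathbb P_k(\PP)$,
\begin{equation*}
\int_{\PP}\nabla\Pi^\nabla_{k,\PP}v\cdot\nabla q\,dx
= -\int_{\PP}v\,\Delta q\,dx + \int_{\partial\PP}v\,(\nabla q\cdot\mathbf n)\,ds.
\end{equation*}
Since $\Delta q\in\mathbb P_{k-2}(\PP)$, the volume term is a linear combination of the moments $\frac{1}{|\PP|}\int_\PP v\,p_{k-2}\,dx$ in \eqref{eq:VkP2d_master_dofs}, hence known; the boundary term is computable because the edge traces of $v$ are explicit polynomials and $\nabla q\cdot\mathbf n$ is a known polynomial on each edge. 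The second normalization condition in \eqref{eq:PiNablaP} fixes the constant and again only requires $\int_{\partial\PP}v\,ds$, computable from the traces. This determines $\Pi^\nabla_{k,\PP}v$ entirely from the dofs.

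Next I would treat $\Pi^0_{k-1,\PP}\nabla$. Integrating by parts in \eqref{eq:PiGradP} gives, for $\mathbf q\in[\mathbb P_{k-1}(\PP)]^2$,
\begin{equation*}
\int_{\PP}(\Pi^0_{k-1,\PP}\nabla v)\cdot\mathbf q\,dx
= -\int_{\PP}v\,(\nabla\cdot\mathbf q)\,dx + \int_{\partial\PP}v\,(\mathbf q\cdot\mathbf n)\,ds,
\end{equation*}
where $\nabla\cdot\mathbf q\in\mathbb P_{k-2}(\PP)$ is again captured by the interior moments and the boundary term by the known edge traces, so the left-hand side is computable for every test polynomial, hence so is $\Pi^0_{k-1,\PP}\nabla v$. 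Finally, for $\Pi^0_{k,\PP}$ I would use the enhancement constraint built into the definition of $V^k(\PP)$ in \eqref{eq:VkP2d_master-final}: for test polynomials of degree $\le k-2$ the required integrals $\int_\PP v\,q\,dx$ are directly the interior moments, while for the homogeneous components of degree $s=k-1,k$ the defining property of $V^k(\PP)$ replaces $\int_\PP v\,q_s\,dx$ by $\int_\PP \Pi^\nabla_{k,\PP}v\,q_s\,dx$, which is computable since $\Pi^\nabla_{k,\PP}v$ is already known; together these span all of $\mathbb P_k(\PP)$, giving $\Pi^0_{k,\PP}v$.

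The main obstacle, and the point that distinguishes this lemma from the standard straight-edge VEM argument, is ensuring that the boundary integrals over the curved edge $\xi$ are genuinely computable. This rests entirely on the fact that ${p_k^*}_{|\xi}$ is an \emph{explicit} polynomial in the parametrization of $\xi$, so that integrals of the form $\int_\xi v\,(\nabla q\cdot\mathbf n)\,d\xi$ reduce to one-dimensional integrals of known functions along a $C^1$ curve, evaluable by quadrature to any desired accuracy; the geometric regularity assumption \ref{mesh2D-C1} guarantees that $\mathbf n$ and the arclength element are well defined along $\xi$. I would close by remarking that the argument is uniform across the master, slave, and Dirichlet cases since in each the curved trace is a prescribed polynomial (with $g_D$ playing the role of $p_k^*$ in the Dirichlet case).
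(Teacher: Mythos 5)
Your proof is correct and is precisely the standard computability argument that the paper itself invokes only by reference (its proof of this lemma is a one-line citation to the standard VEM literature): integration by parts reduces each projection to internal moments of order $\le k-2$ plus boundary integrals of explicitly known edge traces, and the enhancement constraint in the definition of $V^k(\PP)$ supplies the missing moments of degree $k-1$ and $k$ needed for $\Pi^0_{k,\PP}$. The only slip is calling $g_D$ a ``known polynomial'' in the Dirichlet case --- it is merely known data, not a polynomial, but the integrals over $\xi$ remain computable by quadrature on the curved edge, exactly as the paper points out in the remark following the lemma.
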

\begin{proof}
The proof is standard, see, e.g. \cite{Acta-VEM}.
\end{proof}
\begin{remark}
    Lemma~\ref{lemma:2dcomputability-plain} assumes the use of quadrature formula on (possibly) curved elements~\cite{chin2021,antolin2022}.
\end{remark}

%

\section{The Discrete Problem in Two Dimensions}\label{sec:discrete2d}

\noindent
Let $\{\mathcal T_h\}_h$ be a sequence of conforming partitions of $\Omega\subset\mathbb R^2$ by (possibly curved) polygons, and let $h = \max_{\PP\in\mathcal T_h}h_{\PP}$. 
We assume that the decomposition respects the discontinuities of $\kappa$. 
For every given function $\eta\in H^{1/2}(\Gamma_D)$, 
we define the global space
\begin{multline}\label{eq:Vkh}
    V^k_{\eta,h} := \{\,v \in H^1({\Omega})\,|\,v_{|\PP} \in V^k(\PP)\quad\forall\,\PP\in\mathcal T_h\text{ without edges in }\Gamma_D\\
    \text{and }v_{|\PP} \in V^k_{\eta}(\PP)\quad\forall\,\PP\in\mathcal T_h\text{ with an edge in }\Gamma_D\}.
\end{multline}
The degrees of freedom for $V^k_{\eta,h}$ will be:
\begin{equation}\label{eq:Vkh_dofs}
    \begin{aligned}
        &\bullet\,\,\text{the values at the vertices of $\Omega\cup\Gamma_N$},\\
        &\bullet\,\,\text{(for $k \geq 2$) the values of $v$ at the $k-1$ Gauss-Lobatto points}\\
        &\qquad\text{of each straight edge internal to $\Omega$ or in $\Gamma_N$},\\
        &\bullet\,\,\text{(for $k \geq 2$) the moments of order $\leq k -2$ internal to each element}.
    \end{aligned}    
\end{equation}
%
%

\noindent
The discrete problem for~\eqref{eq:model-weak} is
\begin{equation}\label{eq:discrete2d}
\begin{cases}
\text{find } u_h \in V^k_{g_D,h} \text{ such that}\\
a_h(u_h, v) = (f_h, v) + \langle g_N, v\rangle\quad\forall\,v \in V^k_{0,h},
\end{cases}
\end{equation}
where
\begin{align}
    &a_h(w, v) = \sum_{\PP\in\mathcal T_h}a_h^{\PP}(w, v),\label{eq:ah}\\
    &a_h^{\PP}(w, v) = \kappa_{|\PP}\int_{\PP}(\Pi^0_{k-1,\PP}\nabla w)\cdot(\Pi^0_{k-1,\PP}\nabla v)\,dx + \kappa_{|\PP}S^{\PP}(w-\Pi^\nabla_{k,\PP}w, v-\Pi^\nabla_{k,\PP}v),\label{eq:ahP}\\
    &{f_h}_{|\PP} = \Pi^0_{k-1} f \quad \forall {\PP\in\mathcal T_h}.\label{eq:fh}
\end{align} 
In    \eqref{eq:ahP} $S^{\PP}(v,w)$ is, as usual, a symmetric positive semi-definite bilinear form such that  there exists a positive constant $\alpha_*$ independent of $h_{\PP}$ such that
\begin{equation}\label{eq:Pcoercivity}
\alpha_* a^{\PP}(v, v) \leq a^{\PP}_h(v, v) 
\end{equation} 
%
where $a^{\PP}(w,v) = \int_{\PP}\kappa \nabla w\cdot\nabla v\,dx$. 

Here we shall use the most common expression
  \begin{equation}\label{stab-dofidofi}
    S^{\PP}(w, v) = \sum_{i = 1}^{N_{\PP}}\text{dof}_i(w)\text{dof}_i(v)\quad\text{($N_{\PP}$ being the number of dofs in $\PP$).}
    \end{equation}
    For other choices of $S^{\PP}$ see  \cite{Wriggers-1} and \cite{beirao2020}.
 We note that, if one of the  two entries of $a_h^{\PP}(\cdot, \cdot)$ is a $q_k\in \mathbb P_k$, then
 $\Pi^\nabla_{k,\PP} q_k \equiv q_k$,\, $\Pi^0_{k-1,\PP}\nabla q_k \equiv \nabla q_k$. Hence,by the definition of projection:
 \begin{equation}\label{consistency}
 a_h^{\PP}(w, q_k)=\kappa_{|\PP}\int_{\PP}(\Pi^0_{k-1,\PP}\nabla w)\cdot \nabla q_k\,dx=a^{\PP}(w, q_k).
 \end{equation}



\begin{remark}
    Whenever the exact solution $u$ is in $\mathbb P_k(\Omega)$, it holds $u_h \equiv u$, so that the patch test is satisfied. 
    \end{remark}

\medskip\noindent

\begin{remark}\label{remark-Pcoercivity} 
 For coercivity~\eqref{eq:Pcoercivity} to hold, stabilization must be applied in each element. Thus, the degrees of freedom common to two or more elements would be stabilized more than once. In general, this does not jeopardize the performance of the method, but when a curved edge $\xi$ separates two elements (say, $\PP$ and $\PP'$) where $\kappa$ assumes very different values, stabilizing only once is the only way to preserve accuracy. We refer to  \cite{beirao2020} for a detailed discussion on the subject. Fortunately, in \cite{beirao2020} it is also shown that inequality \eqref{eq:Pcoercivity} does not need to hold in each element necessarily. It is enough that
\begin{equation}\label{eq:global_coercivity}
\alpha_* a(v, v) \leq a_h(v, v)\quad\forall\,v\in V^k_{0,h}.
\end{equation}
Hence, for every curved edge $\xi$ separating a master element $\PP$ and a slave element $\PP'$, we will stabilize the dofs defining the value on $\xi$ only when dealing with the master element $\PP$.
With our assumptions on the mesh, coercivity \eqref{eq:global_coercivity} can be proved by adapting the arguments in \cite{beirao2017} and  \cite{brenner2017}.
\end{remark}

\subsection{Implementing non-homogeneous Dirichlet boundary conditions}

The solution of~\eqref{eq:discrete2d} can be decomposed as
\begin{equation}\label{eq:lifting}
u_h = u_h^0 + \overline g_D,
\end{equation}
where $u_h^0\in V^k_{0,h} \subset H_{0,\Gamma_D}^1(\Omega)$ and $\overline g_D \in V^k_{g_D,h}$ is a lifting of the boundary datum. Typically, $\overline g_D$ has all the dofs~\eqref{eq:Vkh_dofs} equal to zero.  Therefore, the bilinear form $a_h$ splits into
\[
a_h(u_h^0, v) + a_h(\overline g_D, v),\quad v\in V^k_{0,h}.
\]
The second term is zero on all the elements except on those having an edge, or a vertex, on $\Gamma_D$ and possibly on neighbouring slave elements having a nonzero ${\overline g}_D$ on the curved edge. On such elements, from~\eqref{eq:ahP}, we get
\[
a_h^{\PP}(\overline g_D, v) = \kappa_{|\PP}\int_{\PP}(\Pi^0_{k-1,\PP}\nabla \overline g_D)\cdot(\Pi^0_{k-1,\PP}\nabla v)\,dx + \kappa_{|\PP}S^{\PP}(\overline g_D-\Pi^\nabla_{k,\PP}\overline g_D, v-\Pi^\nabla_{k,\PP}v),
\]
where we recall that $S^\PP(\cdot,\cdot)$ is not computed for slave elements. The terms $\Pi^\nabla_{k,\PP}\overline g_D$ and $\Pi^0_{k-1,\PP}\nabla\overline g_D$ are obtained from~\eqref{eq:PiNablaP} and \eqref{eq:PiGradP}, respectively: 
\begin{align*}
    \int_{\PP}\nabla\Pi^\nabla_{k,\PP}\overline g_D\cdot\nabla q\,dx &= \int_{\partial\PP}\overline g_D(\mathbf n\cdot\nabla q)\,ds - \int_{\PP}\overline g_D \Delta q\, dx\\
    &= {\sum_{e\subset\partial\PP\cap\Gamma_D}}\int_e g_D(\mathbf n\cdot\nabla q)\,ds + \sum_{e\not\subset\partial\PP\cap\Gamma_D\colon \overline g_{D|e}\neq 0}\int_e \overline g_D(\mathbf n\cdot\mathbf q)\,ds - 0,
\end{align*}
\begin{align*}
    \int_{\PP}\Pi^0_{k-1,\PP}\nabla \overline g_D\cdot\mathbf q\,dx &= \int_{\partial P}\overline g_D(\mathbf n\cdot\mathbf q)\,ds - \int_{\PP}\overline g_D \text{ div}\,\mathbf q\,dx\\
    &= {\sum_{e\subset\partial\PP\cap\Gamma_D}}\int_e g_D(\mathbf n\cdot\mathbf q)\,ds + \sum_{e\not\subset\partial\PP\cap\Gamma_D\colon \overline g_{D|e}\neq 0}\int_e \overline g_D(\mathbf n\cdot\nabla q)\,ds - 0.
\end{align*}
The discrete problem can now be rewritten as
\begin{equation}\label{eq:discrete2d-symmetric}
\begin{cases}
\text{find } u_h^0 \in V^k_{0,h} \text{ such that}\\
a_h(u_h^0, v) = (f_h, v) + \langle g_N, v\rangle - a_h(\overline g_D, v)\quad\forall\,v \in V^k_{0,h}.
\end{cases}
\end{equation}

\subsection{An abstract error estimate}

\noindent
Let $\|v\|_{0,E}$ denote the $L^2(E)$ norm, $|v|_{1,E}$ and $\|v\|_{1,E}$ the $H^1(E)$ semi-norm and norm, respectively. When we refer to the whole domain $\Omega$, we will simply write $\|v\|_0, |v|_1$, and $\|v\|_1$.

Let $|v|_{1,h}$ and $\|v\|_{1,h}$ be the broken $H^1$ semi-norm and norm  given by
\begin{equation}\label{eq:brokenH1}
    |v|_{1,h} = \left(\sum_{\PP\in\mathcal T_h}|v|^2_{1,\PP}\right)^{1/2}, \quad \|v\|_{1,h} = \left(\sum_{\PP\in\mathcal T_h}\|v\|^2_{1.\PP}\right)^{1/2}.
\end{equation}

Let $\|\cdot\|_h = \sqrt{a_h(\cdot,\cdot)}$ be the mesh-dependent energy norm, and observe that the symmetry of $a_h(\cdot,\cdot)$ implies continuity:
\begin{equation}\label{eq:ah_continuity}
    a_h(w, v) \leq (a_h(w, w))^{1/2} (a_h(v, v))^{1/2}= \|w\|_h \|v\|_h\quad w, v\in V_{0,h}^k(\Omega).
\end{equation}
Moreover, the stability condition \eqref{eq:global_coercivity} can be rewritten as:
\begin{equation}\label{eq:global_coercivity-h}
\alpha_* a(v, v) \leq a_h(v, v) := \|v\|^2_h \quad\forall\,v\in V^k_{0,h},
\end{equation}
implying
\begin{equation}\label{ellipticity-H1}
C\, \| v \|_1^2 \leq a(v, v) \leq \|v\|^2_h \quad\forall\,v\in V^k_{0,h} .
\end{equation}

\begin{theorem}\label{Theo-abstract}
Problem \eqref{eq:discrete2d} has a unique solution. Moreover, if $u$ is the solution of \eqref{eq:model-weak},  $\uI$ any function in $V^k_{g_D,h}$, and $\up$  any function elementwise in $\mathbb P_k$, the following estimate holds:
    \begin{equation}\label{eq:abstract_error}
        \|u - u_h\|_{1} \leq C \Big( \|u-\uI\|_1 +  \|\uI-\up\|_h  + \|\up-u\|_{1,h}+ \|f-f_h\|_0 )
    \end{equation}
    being $C$ a constant depending on $\alpha_*$ and the data $\kappa_{\min} = \min_{\PP\in\mathcal T_h}\kappa_{|\PP}, \kappa_{\max} = \max_{\PP\in\mathcal T_h}\kappa_{|\PP}$.
 \end{theorem}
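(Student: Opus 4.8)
The plan is to establish well-posedness by Lax--Milgram and then derive the error bound by a Strang-type argument that exploits the polynomial consistency \eqref{consistency} of $a_h$. First I would prove existence and uniqueness. On the space $V^k_{0,h}$ equipped with the $H^1$ norm, the form $a_h$ is coercive by \eqref{ellipticity-H1}, since $a_h(v,v)=\|v\|_h^2\ge C\|v\|_1^2$, and continuous by \eqref{eq:ah_continuity}. The right-hand side of \eqref{eq:discrete2d-symmetric} is a bounded linear functional on $V^k_{0,h}$ (using $f_h\in L^2(\Omega)$, $g_N\in L^2(\Gamma_N)$, and the computed lifting $\overline g_D$). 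Lax--Milgram then yields a unique $u_h^0$, hence a unique $u_h$.

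For the estimate I would set $\delta:=u_h-\uI$, which lies in $V^k_{0,h}$ because both $u_h$ and $\uI$ belong to the affine set $V^k_{g_D,h}$. Starting from $\|\delta\|_h^2=a_h(\delta,\delta)=a_h(u_h,\delta)-a_h(\uI,\delta)$, I would insert the elementwise polynomial $\up$ via $a_h(\uI,\delta)=a_h(\uI-\up,\delta)+a_h(\up,\delta)$, and then use the consistency identity \eqref{consistency} together with the symmetry of each $a_h^{\PP}$ to replace $a_h(\up,\delta)$ by $a(\up,\delta)$. Testing the discrete equation \eqref{eq:discrete2d} with $\delta$ gives $a_h(u_h,\delta)=(f_h,\delta)+\langle g_N,\delta\rangle$.

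The crux, and the step requiring the most care, is bringing in the continuous problem. Since $\delta\in V^k_{0,h}\subset H^1_{0,\Gamma_D}(\Omega)$, it is an admissible test function in \eqref{eq:model-weak}, so $\langle g_N,\delta\rangle=a(u,\delta)-(f,\delta)$. Substituting and cancelling the two identical Neumann contributions (which survive unchanged in both formulations) I expect to reach
\[
\|\delta\|_h^2=(f_h-f,\delta)+a(u-\up,\delta)-a_h(\uI-\up,\delta).
\]
The main obstacle is precisely this bookkeeping of the variational crime: one must check that the only residual discrepancies are the data oscillation $f-f_h$ and the two approximation defects, which hinges on consistency \eqref{consistency} and on $\delta$ being $H^1_{0,\Gamma_D}$-conforming so that the Neumann term matches exactly.

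To finish, I would bound the three terms by Cauchy--Schwarz and continuity: $(f_h-f,\delta)\le\|f-f_h\|_0\|\delta\|_1$, then $a(u-\up,\delta)\le\kappa_{\max}|u-\up|_{1,h}|\delta|_1\lesssim\|u-\up\|_{1,h}\|\delta\|_1$, and $a_h(\uI-\up,\delta)\le\|\uI-\up\|_h\|\delta\|_h$ by \eqref{eq:ah_continuity}. Invoking the norm equivalence $\|\delta\|_1\lesssim\|\delta\|_h$ from \eqref{ellipticity-H1} and dividing by $\|\delta\|_h$ gives
\[
\|\delta\|_h\lesssim\|f-f_h\|_0+\|u-\up\|_{1,h}+\|\uI-\up\|_h.
\]
A final triangle inequality $\|u-u_h\|_1\le\|u-\uI\|_1+\|\delta\|_1\lesssim\|u-\uI\|_1+\|\delta\|_h$ yields \eqref{eq:abstract_error}, with $C$ depending on $\alpha_*$, $\kappa_{\min}$ and $\kappa_{\max}$ through the equivalence and continuity constants.
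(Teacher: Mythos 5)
Your proposal is correct and follows essentially the same route as the paper: the same choice $\delta=u_h-\uI\in V^k_{0,h}$, insertion of $\pm\up$ with the consistency identity \eqref{consistency}, use of $\delta$ as a test function in \eqref{eq:model-weak} to cancel the Neumann term, Cauchy--Schwarz with \eqref{eq:ah_continuity}, and conclusion via \eqref{ellipticity-H1} and the triangle inequality. The only cosmetic differences are that you invoke Lax--Milgram where the paper simply deduces uniqueness (hence existence, in finite dimension) from coercivity \eqref{eq:global_coercivity-h}, and that your $a(u-\up,\delta)$ should strictly be written as the elementwise sum $\sum_{\PP}a^{\PP}(u-\up,\delta)$ since $\up$ is only piecewise $H^1$ --- which your subsequent bound in broken norms already treats correctly.
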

\begin{proof}
Uniqueness is a consequence of \eqref{eq:global_coercivity-h}. Setting $\delta=u_h-u_I\in V_{0,h}^k$, the proof goes in the lines of \cite{beirao2020}.
 By using \eqref{eq:discrete2d}, $\pm \up$, \eqref{consistency}, $\pm u$, \eqref{eq:model-weak}, \eqref{eq:ah_continuity}, and finally \eqref{ellipticity-H1} we obtain
    \begin{equation}\label{eq:abstract_error_step1}
    \begin{split}
        \|\delta\|^2_h & = a_h(u_h-\uI, \delta) =(f_h, \delta) + \langle g_N, \delta\rangle -a_h(\uI, \delta)\\
        &=(f_h, \delta) + \langle g_N, \delta\rangle -a_h(\uI-\up, \delta)-\sum_{\PP} a^{\PP}(\up,\delta)\\
        &=(f_h, \delta) + \langle g_N, \delta\rangle -a_h(\uI-\up, \delta)-\sum_{\PP} a^{\PP}(\up-u,\delta)-a(u,\delta)\\
        &=(f_h-f, \delta) -a_h(\uI-\up, \delta) -\sum_{\PP} a^{\PP}(\up-u,\delta)\\
        &\leq  C \Big(\|f-f_h\|_0 \|\delta\|_1 + \|\uI-\up\|_h \|\delta\|_h + \|\up-u\|_{1,h}\|\delta\|_1\Big)\\
         &\leq  C \Big(\|f-f_h\|_0  + \|\uI-\up\|_h  + \|\up-u\|_{1,h}\Big)\|\delta\|_h .   
         \end{split}
    \end{equation}
    Then, the result follows from \eqref{ellipticity-H1} and the triangle inequality.
\end{proof}
From Theorem \ref{Theo-abstract} we see that, in order to have the usual estimates in terms of powers of $h$ and regularity of the solution, we need to estimate the four terms in \eqref{eq:abstract_error}. The last term is easy, thanks to \eqref{eq:fh} and standard polynomial approximation results:
\begin{equation}\label{error-f}
\|f - f_h \|_0 \le C\, h^k \Big(\sum_{\PP} |f|^2_{k,\PP}\Big)^{1/2}.
\end{equation}
For the other terms we need some work.

\subsection{Definition of $\uI$ and $\up$}
Let $\PP$ be an element of $\mathcal{T}_h$, and let $p^*_k\in \P_k(\PP)$ be the polynomial associated with a function $u\in H^s(\PP),\,s\ge 2$, defined as in \eqref{eq:lsq2d}. Following \cite{beirao2020} we define
\begin{equation}\label{def:uI} 
\begin{aligned}
&\uI\in V^k(\PP) \mbox{ such that}\\ 
&\uI=u \mbox{ at the vertices of }\PP, \\
&\uI=u \mbox{ and at the Gauss-Lobatto points of the straight edges}, \\
&\uI=p^*_k \mbox{ on the curved edge $\xi$, and }\\
&\Delta \uI=\Pi^0_{k-2,\PP} \Delta u. 
\end{aligned}
\end{equation}
The following estimate can be proved by adapting the arguments of  \cite{beirao2020}, where it was proved that, under our assumptions on the mesh, there exists a positive constant $C$ such that, for all $\PP$ 
\begin{equation}\label{error-uI}
|u-\uI|_{1,\PP} \leq C h^{s-1}_{\PP} |u|_{s,\PP} \quad s\ge 2.
\end{equation}
Let $\up\in \P_k$ be the $L^2$-projection of $u$, for which we have
\begin{equation}\label{error-up}
\|u - \up \|_{1,\PP}:=\|u - \Pi^0_{k} u\|_{1,\PP} \le C h^{\min\{k,s-1\}}_{\PP}|u|_{s,\PP}.
\end{equation}
It remains to estimate the term $\|\uI-\up\|_h$. We have:
\begin{equation}
\begin{aligned}
\|\uI-\up\|^2_h =&a_h(\uI-\up, \uI-\up):= \sum_{\PP}\kappa_{|\PP}\int_{\PP}|\Pi^0_{k-1,\PP}\nabla (\uI-\up)|^2\,dx \\
&+ \sum_{\PP}\kappa_{|\PP}S^{\PP}((I-\Pi^\nabla_{k,\PP})(\uI-\up), (I-\Pi^\nabla_{k,\PP})(\uI-\up)).
\end{aligned}
\end{equation}
The first term is easily estimated from \eqref{error-uI}-\eqref{error-up}. The second term is also standard, upon noticing that our choice \eqref{stab-dofidofi} for the stabilization can be treated exactly as for straight polygons, see \cite{beirao2020}. Indeed, $S^{\PP}(v,v)$ splits into a part $S^{int}$ involving internal moments and a part $S^{\delta}$ involving boundary values. $S^{int}$ is easily bounded by observing that each moment \eqref{eq:VkP2d_straight_dofs} is bounded by $\|v\|_{0,\PP}h_{\PP}^{-1}$. Thus,
\begin{equation}\label{stima-SE-int}
S^{int}(v,v)\lesssim \|v\|^2_{0,\PP}h_{\PP}^{-2}.
\end{equation}
Next, denoting by $N$ the number of boundary nodes $\nu_i$, we have
\begin{equation}\label{stima-SE-boundary}
\begin{aligned}
S^{\delta}(v,v) &= \sum_{i=1}^{N} v(\nu_i)^2 \le N \|v\|^2_{\infty,\partial\PP}\lesssim h_{\PP}^{-1}\|v\|^2_{0,\partial\PP}+ |v|^2_{1/2,\partial\PP}\\
&\lesssim h_{\PP}^{-2}\|v\|^2_{0,\PP}+ |v|^2_{1,\PP}.
\end{aligned}
\end{equation}
Under assumption \ref{mesh2D-star} on the mesh  we have
\begin{equation}\label{intermediate}
h_{\PP}^{-2}\|v\|^2_{0,\PP}\lesssim |v|^2_{1,\PP}.
\end{equation}
Using \eqref{intermediate} in \eqref{stima-SE-int}-\eqref{stima-SE-boundary} we deduce
\begin{equation}\label{stima-final-SE-1}
S^{\PP}(v,v) \lesssim |v|^2_{1,\PP}.
\end{equation}
Hence, from \eqref{stima-final-SE-1} we obtain
\begin{equation}\label{stima-final-SE}
\begin{aligned}
&S^{\PP}((I-\Pi^\nabla_{k,\PP})(\uI-\up), (I-\Pi^\nabla_{k,\PP})(\uI-\up))\\
&=S^{\PP}(\uI-\Pi^\nabla_{k,\PP}\uI,\uI-\Pi^\nabla_{k,\PP}\uI)
\lesssim |\uI-\Pi^\nabla_{k,\PP}\uI|^2_{1,\PP}\\
&\lesssim |\uI- u|^2_{1,\PP}+ |u-\Pi^\nabla_{k,\PP}u|^2_{1,\PP} +|\Pi^\nabla_{k,\PP}(u-\uI)|^2_{1,\PP},
\end{aligned}
\end{equation}
so that we finally obtain
\begin{equation}\label{error-ui-up}
\|\uI-\up\|_h \le C\, h^{s-1} |u|_{s} \quad s\ge 2.
\end{equation}
\begin{corollary}\label{final-estimate}
Under our assumptions on the mesh, the following estimate holds:
\begin{equation}
\|u -u_h\|_{1} \le C\, h^k |u|_{k+1}.
\end{equation}
\end{corollary}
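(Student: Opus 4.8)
The plan is to obtain the corollary as a direct consequence of the abstract estimate of Theorem~\ref{Theo-abstract}: all the analytical effort has already been spent in the interpolation bound \eqref{error-uI}, the projection bound \eqref{error-up}, the stabilization bound \eqref{error-ui-up}, and the data bound \eqref{error-f}. What remains is essentially bookkeeping: I would specialize each of these to the regularity exponent $s=k+1$, insert them into \eqref{eq:abstract_error} with the specific choices $\uI$ and $\up$ of \eqref{def:uI} and \eqref{error-up}, and check that every one of the four contributions is of order $h^k|u|_{k+1}$.

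For the two terms already written globally the substitution is immediate. Taking $s=k+1$ in \eqref{error-ui-up} gives $\|\uI-\up\|_h\le C h^{k}|u|_{k+1}$. Taking $s=k+1$ in \eqref{error-up} gives $\min\{k,s-1\}=k$ on every element, so after summing the elementwise bounds $\|\up-u\|_{1,h}\le C h^{k}|u|_{k+1}$. The term $\|u-\uI\|_1$ requires one extra observation: summing the elementwise \emph{seminorm} estimate \eqref{error-uI} with $s=k+1$ controls only $|u-\uI|_1\le C h^{k}|u|_{k+1}$. To upgrade to the full norm I would use that $u\in H^1_{g_D,\Gamma_D}(\Omega)$ and $\uI\in V^k_{g_D,h}\subset H^1_{g_D,\Gamma_D}(\Omega)$, whence $u-\uI$ vanishes on $\Gamma_D$; since $|\Gamma_D|>0$, the Poincar\'e--Friedrichs inequality yields $\|u-\uI\|_1\lesssim|u-\uI|_1\le C h^{k}|u|_{k+1}$.

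The fourth term $\|f-f_h\|_0$ is bounded directly by \eqref{error-f}, which is already $O(h^k)$. This is the only place where I expect a genuine subtlety rather than routine substitution: to express the right-hand side purely through $|u|_{k+1}$, as in the statement, one invokes the equation itself. Since $\kappa$ is piecewise constant and the mesh respects its discontinuities, $f=-\kappa_{|\PP}\Delta u$ on each $\PP$, so $(\sum_{\PP}|f|^2_{k,\PP})^{1/2}\lesssim\kappa_{\max}|u|_{k+2}$; strictly, this bound asks for $u\in H^{k+2}$, and writing the estimate solely in terms of $|u|_{k+1}$ tacitly absorbs the data regularity into the stated constant. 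Collecting the four $O(h^k)$ contributions and folding the dependence on $\alpha_*$, $\kappa_{\min}$ and $\kappa_{\max}$ into $C$ then delivers $\|u-u_h\|_1\le C h^{k}|u|_{k+1}$.
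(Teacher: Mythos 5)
Your proposal is correct and takes essentially the same route as the paper, whose entire proof is the one-liner ``collecting \eqref{error-uI}, \eqref{error-up}, \eqref{error-ui-up}, and \eqref{error-f} in \eqref{eq:abstract_error} the result follows.'' Your two additional observations---the Poincar\'e upgrade from the seminorm bound $|u-\uI|_1$ to the full norm $\|u-\uI\|_1$, and the fact that the term $\|f-f_h\|_0$ is genuinely controlled by $|f|_k$ (hence by higher regularity of $u$, or by a sharper duality argument) rather than by $|u|_{k+1}$ itself---are legitimate refinements of details the paper leaves implicit, not a different argument.
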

\begin{proof}
By collecting \eqref{error-uI}, \eqref{error-up}, \eqref{error-ui-up}, and \eqref{error-f} in \eqref{eq:abstract_error} the result follows.
\end{proof}

\section{Virtual Element Method in Three Dimensions}\label{sec:VEM3d}

\renewcommand{\PP}{\mathcal P}

Let $\{\mathcal T_h\}_h$ be a sequence of conforming partitions of $\Omega\subset\mathbb R^3$ by (possibly curved) polyhedra $\PP$, each with diameter $h_\PP$. The decomposition is assumed to respect the discontinuities of $\kappa$. We assume that $\PP$ has at most one face that will be treated as ``curved'' (geometrically speaking, it could be either curved or flat or almost flat). All the other faces are assumed to be flat. This condition is mandatory for our approach. Note that $\PP$ is still allowed to have curved edges not attached to the curved face (see the numerical tests in Section~\ref{test3d-bubble}).

We make the following assumption on the decomposition (see, e.g., \cite{max3}): there exists a real number $\rho_0 > 0$ such that each polyhedron $\PP$ satisfies the following conditions:
\begin{enumerate}[label=(B\arabic*)]
\item\label{mesh3D-C1} the boundary of $\PP$ is piecewise $C^1$;
\item\label{mesh3D-star} $\PP$ is star-shaped with respect to a ball of radius $\geq \rho_0 h_{\PP}$;
\item\label{mesh3D-face-star} every face $\ff$ of is star-shaped with respect to a disk of radius $\geq \rho_0 h_{\ff}\geq \rho^2_0 h_{\PP}$;
\item\label{mesh3D-rho} every edge $e$ of $\ff$ has length  $h_e \geq \rho^2_0  h_\PP$.
   \end{enumerate}
We can further relax the above assumptions~\cite{beirao2017,brenner2017,brenner2018,beirao_curvo_2019}.

Now, let us introduce the local spaces. First,  let us recall the definition of the local spaces on  polyhedra with flat faces, see \cite{max3} or \cite{BBDMR-Cina}. Let:
\begin{equation}\label{def:localspaces}
\widetilde V^k(\PP):=\{ v\in H^1(\PP)\cap C^0(\overline\PP) :\, 
v_{|\ff}\in V^k(\ff) \, \forall \mbox{ face } {\ff}, \,\Delta v \in \P_{k}(\PP) \},
\end{equation}
with $V^k(\ff)$ defined in \eqref{eq:VkP2d_straight-final}.
In $\widetilde V^k(\PP)$  the following operators are well defined:
\begin{equation}\label{eq:VkP3d_straight_dofs}
    \begin{aligned}
    &\bullet\,\,\text{the values of $v$ at each vertex of $\mathcal P$,}\\
    &\bullet\,\,\text{(for $k\geq 2$) the values of $v$ at the $k-1$ Gauss-Lobatto points}\\
    &\qquad \text{of each edge $e\subset\partial\mathcal P$,}\\
    &\bullet\,\,\text{ (for $k \geq 2$) the moments of order up to $k-2$ on each face {\ff},}\\
      &\bullet\, \, \text{ (for $k \geq 2$) the moments of order up to $k-2$ in {$\PP$}. We use a scaled}\\
      &\qquad \text{polynomial basis $\{q_\beta\}_\beta$ of $\mathbb P_{k-2}(\PP)$, with}\\
      &\qquad q_\beta(x,y,z) = \left(\frac{x-x_{\PP}}{h_{\PP}}\right)^{\beta_1}\left(\frac{y-y_{\PP}}{h_{\PP}}\right)^{\beta_2}\left(\frac{z-z_{\PP}}{h_{\PP}}\right)^{\beta_3}\\
      &\qquad\qquad\qquad\qquad\qquad \forall\,\beta=(\beta_1, \beta_2, \beta_3)\in\mathbb N^3,\quad\beta_1+\beta_2+\beta_3 \leq k-2.
    \end{aligned}
\end{equation}
In particular, they allow to compute $\Pi^{\nabla}_{k,\PP} v, \forall v \in \widetilde V^k(\PP)$.

As we did in 2d, let us first examine the case of a tetrahedron $\PP$, with three flat faces, and one curved face $\sigma$. Then, each flat face has one curved edge $\xi$, and we proceed as we did in\eqref{eq:lsq2d_tri}. This will allow to define a continuous trace of $v$ on the whole boundary of $\sigma$. Next, the natural extension to define $v$ on $\sigma$ would be to use the values at the four vertices and the degrees of freedom not positioned on $\sigma$ to individuate a polynomial in $\P_k(\PP)$ and use its trace on $\sigma$. A simple count of the degrees of freedom shows that they amount to $4+ 3(k-1)+ 3k(k-1)/2 + (k-1)k(k+1)/6$, more than the dimension of $\P_k(\PP)$. Using the internal moments up to the order $k-3$ instead of $k-2$ fixes the problem. Indeed:
\begin{equation}\label{dim-Pk-3d}
4+ 3(k-1)+ \frac{3k(k-1)}{2} + \frac{k(k-1)(k-2)}{6}\equiv dim\,\P_k(\PP).
\end{equation}
Hence, we can compute a polynomial $p^*_k\in \P_k$ through the conditions
\begin{equation}\label{eq:VkP3d_teth_dofs}
    \begin{aligned}
      &\bullet\,\, p^*_k =v \mbox{ at each vertex of } \PP,\\
    &\bullet\,\,\text{(for $k\geq 2$)\quad $p_k^* = v$ at the $k-1$ Gauss-Lobatto points}\\
    &\qquad \forall\, \text{{\bf straight} edge } e,\\
    &\bullet\,\,\text{ (for $k \geq 2$) } \int_{\ff} p^*_k q_{k-2} =\int_{\ff} v q_{k-2} \quad \forall q_{k-2}\in \P_{k-2}({\ff}),\,\forall \text{ {\bf flat} face } {\ff},\\
    &\bullet\, \, \text{ (for $k \geq 3$) } \int_{\PP} p^*_k q_{k-3} =\int_{\PP} v q_{k-3} \quad \forall q_{k-3}\in \P_{k-3}({\PP}).  
    \end{aligned}
\end{equation}
At this point the natural extension of what we did in 2d would be to set $v_{|\sigma}= p^*_k $. This cannot be done in 3d since, in general, the trace of  $p^*_k $ on $\partial \sigma$ does not coincide with the trace coming from the flat faces. Therefore we shall use the procedure described in Remark \ref{alternative-procedure}, copying the moments of $p^*_k $ up to the order needed. In particular, we need to compute $\Pi^\nabla_{k,\PP} v$. Looking at the definition \eqref{eq:PiNablaP} we see that, upon integration by parts, we need to compute $\int_{\sigma} v \nabla q_{k}\cdot {\bf n} \,d\sigma$ and $\int_{\sigma} v \,d\sigma$. Then we set
\begin{equation}\label{mimetic-3D}
\int_{\sigma} v \nabla q_{k}\cdot {\bf n} \,d\sigma := \int_{\sigma} p_k^* \nabla q_k \cdot {\bf n}\,d\sigma \quad \forall q_k\in\P_k(\sigma),\quad \int_{\sigma} v \,d\sigma:= \int_{\sigma} p_k^* \,d\sigma.
\end{equation}
On the other hand, we also need to compute $\Pi^0_{k-1} \nabla v$, for which we need to compute $\int_{\sigma} v {\bf q}_{k}\cdot {\bf n} \,d\sigma$. Hence we set
\begin{equation}\label{Pi0-3d}
\int_{\sigma} v {\bf q}_{k-1}\cdot {\bf n} \,d\sigma := \int_{\sigma} p^*_k {\bf q}_{k-1}\cdot {\bf n} \,d\sigma.
\end{equation}

To play safe, a function $v$ will be individuated by conditions \eqref{eq:VkP3d_straight_dofs} plus the moments of 
$p_k^*$ up to order $k$  on $\sigma$.

\noindent
The analogues of the spaces \eqref{eq:VkP2d_master}-\eqref{eq:VkP2d_master-final} will be
\begin{multline}\label{eq:VkP3d_master-mimetic}
    {\widetilde V}^k(\PP) := \{\,v \in H^1(\PP)\cap C^0(\overline\PP)\, : \,v_{|\ff} \in V^k(\ff)\,\,\forall\text{ flat face }{\ff}\subset\partial\PP,\\
    \int_{\sigma} v q_{k} \,d\sigma = \int_{\sigma}  p_k^* q_{k} \,d\sigma \quad \forall q_{k} \in \mathbb P_{k}(\PP),  \,\Delta v \in\mathbb P_{k}(\PP)\,\},
\end{multline}
and then
\begin{equation}\label{eq:VkP3d_master-final-mimetic}
V^k(\PP)=\{ v \in {\widetilde V}^k(\PP)\, :\, \int_{\PP} v q_s \,dx = \int_{\PP}\Pi^\nabla_{k,\PP} v q_s \,dx\quad \forall q_s\in {\mathbb P}^{\rm hom}_s(\PP), \,s=k-1,k\}.
\end{equation}
As before, we still have, for every $k \geq 1$,
\begin{equation*}
    \mathbb P_k(\PP) \subseteq V^k(\PP).
\end{equation*}
For a generic polyhedron with one curved face $\sigma$ the number of conditions \eqref{eq:VkP3d_teth_dofs} is bigger than the dimension of 
$\P_k(\PP)$, so that we use a least square approach like we did in 2d (see \eqref{eq:lsq2d}). Below, we summarize the whole procedure. First, for $v \in V^k(\PP)$, introduce the following well defined operators:
\begin{equation}\label{eq:VkP3d_master_dofs}
    \begin{aligned}
    &\bullet\,\,\text{the values of $v$ at each vertex of $\PP$,}\\
    &\bullet\,\,\text{(for $k\geq 2$) the values of $v$ at the $k-1$ Gauss-Lobatto points}\\
    &\qquad \text{of each straight edge $e\subset\partial\PP$,}\\
    &\bullet\,\,\text{ (for $k \geq 2$) the moments of order up to $k-2$ on each flat face {\ff},}\\
    &\bullet\, \, \text{ (for $k \geq 2$) the moments of order up to $k-2$ in {$\PP$}}.
    \end{aligned}
\end{equation}
Then, we proceed as follows:
\begin{itemize}
\item On each flat face {\ff} with one curved edge $\xi$ we compute a polynomial $p^{\ff}_k\in \P_k(\ff)$ as in \eqref{eq:lsq2d} and set $v_{|\xi}=p^{\ff}_k$. This will ensure continuity of $v$ on $\partial \sigma$ and allow to compute $\Pi^{\nabla,\ff}_k v$ on each face.
\item We compute a polynomial $p^*_k \in \P_k(\PP)$ as the {\bf unconstrained} least square solution of \eqref{eq:VkP3d_teth_dofs}. (Note that the functions having internal moments of order $k-2$  will provide $p^*_k \equiv 0$ by construction.)
\item We apply \eqref{mimetic-3D} and \eqref{Pi0-3d} in order to compute $\Pi^{\nabla,\PP}_k v$ and $\Pi^0_{k-1} \nabla v$, so that the discrete bilinear form is completely computable.
\end{itemize}
The above procedure applies to elements with a curved face $\sigma$ at the interior, and refers to a "master" element. The neighbouring element sharing the curved face will be treated  as a "slave", like we did in the 2d case. More precisely, let $\PP, \PP'$ denote a master-slave pair of elements. Also, let $N_\PP$ be the dimension of $V^k(\PP)$, and let $\{\varphi_i\}, i = 1, \dots, N_\PP$ be the corresponding canonical basis functions defined as $\text{dof}_i(\varphi_j) = \delta_{ij}$ and with moments on $\sigma$ computed as just mentioned above. Let $N_\sigma$ be the number of dofs positioned on $\sigma$, that is, the number of vertices of $\sigma$. Let $N_{\PP'}$ be the number of dofs~\eqref{eq:VkP3d_master_dofs} of $\PP'$, and let $\{\widetilde\varphi_i\}, i = 1, \dots, N_{\PP'}-N_\sigma$ be a subset of basis functions in $V^k(\PP')$ associated with the dofs not positioned on $\sigma$, defined as
\begin{equation}\label{eq:basis-slave-zero-3d}
\text{dof}_i({\widetilde\varphi}_j)=\delta_{ij}\quad \text{ and } {{\widetilde\varphi}_i}= 0 \text{ on }\sigma\quad i,j=1,\cdots,N_{\PP'}-N_\sigma.
\end{equation}
The basis functions for the space $V^k(\PP')$ will be:
\begin{itemize}
\item the $N_{\PP}-N_\sigma$ basis functions $\varphi_i$ in $V^k(\PP)$, extended to $\PP'$ as:
  \begin{align*}
      &\varphi_i= 0 \mbox{ on the straight edges of } {\PP'},\\
      &\int_{\ff} \varphi_i\, q_{k-2} = 0 \quad \forall q_{k-2}\in \P_{k-2}({\ff}),\,\forall \text{ {\bf flat} face } {\ff\in\partial\PP'},\\
      &\int_{\PP'}\varphi_i\, q_{k-2} = 0 \quad \forall q_{k-2}\in \P_{k-2}(\PP');
  \end{align*}
\item the $N_\sigma$ basis functions in $V^k(\PP)$ associated with the vertices of $\sigma$, extended to $\PP'$ as:
  \begin{align*}
    &\varphi_i = 0 \text{ on the vertices of }\PP'\not\in\sigma,\\
    &\varphi_i = 0 \text{ on the }k-1 \text{ Gauss-Lobatto points of the straight edges of  }\PP',\\    
    &\int_{\ff} \varphi_i\, q_{k-2} = 0 \quad \forall q_{k-2}\in \P_{k-2}({\ff}),\,\forall \text{ {\bf flat} face } {\ff\in\partial\PP'},\\
    &\int_{\PP'}\varphi_i\, q_{k-2} = 0 \quad \forall q_{k-2}\in \P_{k-2}(\PP');    
  \end{align*}
\item the $N_{\PP'}-N_\sigma$ functions ${\widetilde\varphi}_i$ defined in \eqref{eq:basis-slave-zero-3d}, extended to zero in ${\PP}$.
\end{itemize}
Again, for every $k \geq 1$,
\begin{equation*}
    \mathbb P_k(\PP') \subseteq V^k(\PP').
\end{equation*}


\subsection{Elements with a curved face on $\Gamma_D$}
Local spaces are analogous to~\eqref{eq:VkP3d_master-mimetic} and \eqref{eq:VkP3d_master-final-mimetic}, except for the trace on $\sigma$, which is assigned:
\begin{equation*}
  \begin{split}
  {\widetilde V}^k_{g_D}(\PP) := \{\,&v \in H^1(\PP)\cap C^0(\overline\PP)\, : \,v_{|\ff} \in V^k(\ff)\,\,\forall\text{ flat face }{\ff}\subset\partial\PP\setminus\Gamma_D,\\
  &v_{|\ff} \in V^k_{g_D}(\ff)\,\,\forall\text{ flat face }{\ff}\subset\partial\PP\text{ with a curved edge on }\Gamma_D,\\
  &v_{|\sigma} = g_D,  \,\Delta v \in\mathbb P_{k}(\PP)\,\},
  \end{split}
\end{equation*}
\begin{equation*}
V^k_{g_D}(\PP)=\{ v \in {\widetilde V}^k_{g_D}(\PP) : \int_{\PP} v q_s \,dx = \int_{\PP}\Pi^\nabla_{k,\PP} v q_s \,dx\quad \forall q_s\in {\mathbb P}^{\rm hom}_s(\PP), \,s=k-1,k\}.
\end{equation*}

\subsection{Elements with a curved face on $\Gamma_N$}
Here too we proceed differently from the 2d case, since we do not have traces on the curved faces, only moments. We define local spaces analogous to~\eqref{eq:VkP3d_master-final-mimetic}, then we approximate the Neumann datum with a polynomial and use the moments we have. More precisely:
\begin{equation}
\int_{\sigma} g_N\,v \,d\sigma \simeq \int_{\sigma} \Pi^0_{k} g_N\,v \,d\sigma = \int_{\sigma} \Pi^0_{k} g_N\,p^*_k \,d\sigma .
\end{equation}
In this way we introduce an approximation without affecting the order of convergence.

\subsection{The Discrete Problem in Three Dimensions}

For every given function $\eta\in H^{1/2}(\Gamma_D)$, we define the global space
\begin{multline*}
    V^k_{\eta,h} := \{\,v \in H^1({\Omega}) : v_{|\PP} \in V^k(\PP)\quad\forall\,\PP\in\mathcal T_h\text{ without faces on }\Gamma_D\\
    \text{and }v_{|\PP} \in V^k_{\eta}(\PP)\quad\forall\,\PP\in\mathcal T_h\text{ with a face on }\Gamma_D\}.
\end{multline*}
The degrees of freedom for $V^k_{\eta,h}$ will be:
\begin{equation*}
    \begin{aligned}
      &\bullet\,\,\text{the values at the vertices of $\Omega\cup\Gamma_N$},\\
      &\bullet\,\,\text{(for $k \geq 2$) the values of $v$ at the $k-1$ Gauss-Lobatto points}\\
      &\qquad\text{of each straight edge internal to $\Omega$ or in $\Gamma_N$},\\
      &\bullet\,\,\text{(for $k \geq 2$) the moments of order $\leq k -2$ internal to each flat face},\\
      &\bullet\,\,\text{(for $k \geq 2$) the moments of order $\leq k -2$ internal to each polyhedron}.
    \end{aligned}    
\end{equation*}
The discrete problem for~\eqref{eq:model-weak} is defined by~\eqref{eq:discrete2d}, where
\begin{align*}
    &a_h(w, v) = \sum_{\PP\in\mathcal T_h}a_h^{\PP}(w, v),\\
    &a_h^{\PP}(w, v) = \kappa_{|\PP}\int_{\PP}(\Pi^0_{k-1,\PP}\nabla w)\cdot(\Pi^0_{k-1,\PP}\nabla v)\,dx + \kappa_{|\PP}h_{\PP}S^{\PP}(w-\Pi^\nabla_{k,\PP}w, v-\Pi^\nabla_{k,\PP}v),\\
    &{f_h}_{|\PP} = \Pi^0_{k-1} f \quad \forall {\PP\in\mathcal T_h},
\end{align*}
and $S^\PP(\cdot,\cdot)$ is the three dimensional analogue of~\eqref{stab-dofidofi}.

\begin{remark}
If the exact solution $u$ belongs to $\mathbb P_k(\Omega)$, we get $u_h\equiv u$, so that the patch test is satisfied.
\end{remark}

\begin{remark}
The considerations in Remark~\ref{remark-Pcoercivity} regarding coercivity of the discrete problem and stabilization apply also to the three dimensional setting. Hence, for every curved face $\sigma$ separating a master element $\PP$ and a slave element $\PP'$, we will stabilize the dofs defining the value on $\sigma$ only when dealing with the master element $\PP$.
\end{remark}

\begin{remark}
We believe that coercivity of the discrete problem and error estimates could be proved by adapting the arguments in~\cite{brenner2018}. However, note that on a curved face belonging to a master elements or to $\Gamma_N$, virtual functions are only known through their moments, no functional space is introduced. This is equivalent to using Mimetic Finite Differences in such elements~\cite{MFD}, and classical Virtual Elements in the other ones. This would make the theoretical analysis more complicate. On the other hand, as a combination of two reliable methods, it is reasonable to expect the usual level of accuracy. The numerical results indicate a good performance.
\end{remark}

\section{Numerical Experiments}\label{sec:numerical}

\noindent
In this section, we present some numerical experiments to validate the proposed method. We measure the following relative errors in broken $H^1(\Omega)$ semi-norm and in the $L^2(\Omega)$ norm, respectively
\begin{align*}
  &e_1 = \frac{||\nabla u - \Pi_{k-1}^{0}\nabla u_h||_{0,h}}{|u|_1}, & &e_0 = \frac{||u - \Pi_k^{\nabla}u_h||_{0}}{||u||_0}.
\end{align*}
As described in the previous sections, we define the stabilization bilinear form as a suitably scaled euclidean scalar product of the vectors of degrees of freedom.

Some geometrical data of the meshes employed in the tests will be provided below. We use the following notation: $N_{\mathcal P}$, number of polygons (in 2D) or polyhedra (in 3D); $N_{\mathcal F}$, number of faces (in 3D), $N_{\mathcal E}$, number of edges; $N_{\mathcal V}$, number of vertices; $h^\text{min} = \text{min}_{\mathcal P\in\mathcal T_h}h_\mathcal{P}^\text{min}$ with $h_\mathcal{P}^\text{min}$ being the minimum distance between any pair of vertices of $\mathcal P$; $\bar h = \frac{1}{N_P}\sum_{\mathcal P\in\mathcal T_h}h_\mathcal{P}$, average mesh size.

\subsection{Patch and convergence tests with discontinuous diffusion in two dimensions}\label{test2d-k1k2}

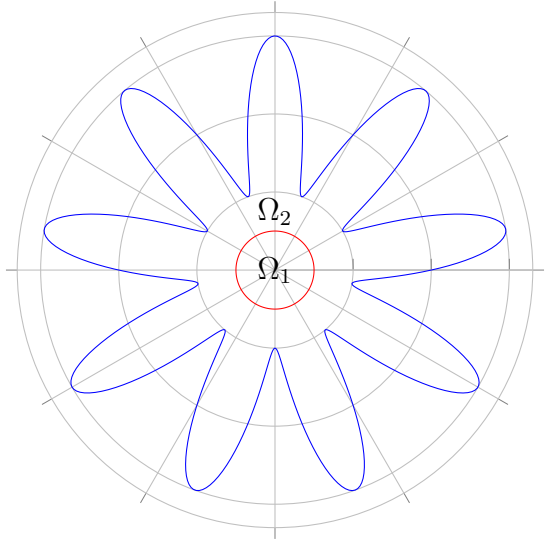
\begin{figure}
    \centering
    \begin{tikzpicture}
        \begin{polaraxis}[xticklabel=\empty, yticklabel=\empty, axis line style={color=lightgray}]
            \addplot+[mark=none, domain=0:360, samples=500] {2+sin(9*x)};
            \addplot+[mark=none, domain=0:360, samples=500] {0.5};
            \node at (0,0) {$\Omega_1$};
            \node at (90,0.75) {$\Omega_2$};
        \end{polaraxis}        
    \end{tikzpicture}
    \caption{Computational domain of test~\ref{test2d-k1k2}.}
    \label{fig:enter-label}
\end{figure}

We start by considering a problem with discontinuous diffusion in two dimensions. Let $\Omega\subset\mathbb R^2$ be the region bounded by the polar curve
\begin{equation*}
  r(\theta) = 2 + \sin(9\theta), \theta\in[0, 2\pi),
\end{equation*}
split into a smaller disk $\Omega_1$ centered at $(0,0)$ with radius $1/2$ and the complement $\Omega_2 = \Omega\setminus\Omega_1$. We consider problem~\eqref{eq:model} with:
\begin{align*}
  &\kappa_{|\Omega_1} = \kappa_1 = 0.5, \kappa_{|\Omega_2} = \kappa_2 = 10,\\
  &\Gamma_D = \Set{(x,y)\in\partial\Omega | y > 0}, \Gamma_N = \Set{(x,y)\in\partial\Omega | y < 0},
\end{align*}
and choose right hand side and boundary data $g_D, g_N$ such that the exact solution is:
\begin{align*}
  &u_1(x, y) = -\frac{r^2}{\kappa_1} + \frac{f_2-f_1}{\kappa_2}\sin(3.5\pi) + \frac{1}{4}\left(\frac{1}{\kappa_1} - \frac{1}{\kappa_2}\right) + \frac{f_2}{\kappa_2}\quad\text{in }\Omega_1,\\
  &u_2(x, y) = -\frac{r^2}{\kappa_2} + \frac{f_2-f_1}{\kappa_2}\sin(7\pi r) + \frac{f_2}{\kappa_2}\quad\text{in }\Omega_2,
\end{align*}
with $r = \sqrt{x^2+y^2}$. Since $\kappa_1\neq\kappa_2$, the solution has a discontinuous gradient along the circle of radius $1/2$.

\subsubsection{Patch test}\label{test2d-k1k2-patch}

\begin{figure}
    \centering
    \subfloat{\includegraphics[width=0.45\textwidth, valign=c]{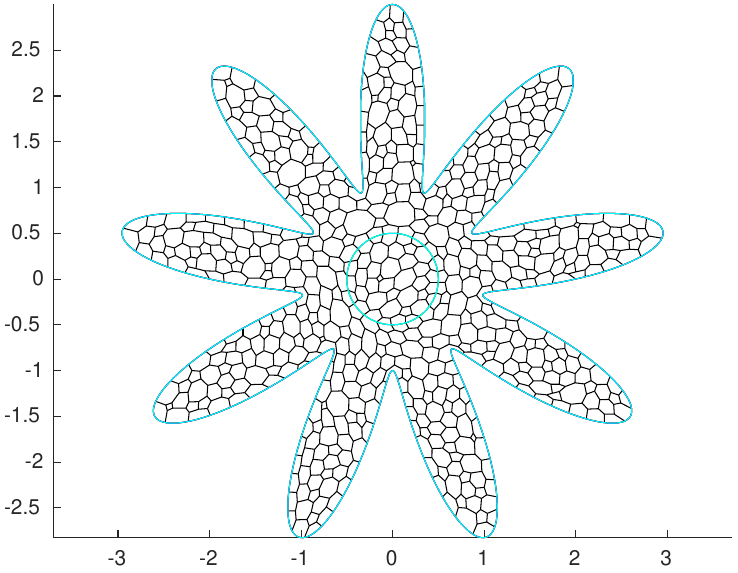}}
    \subfloat{\includegraphics[width=0.55\textwidth, valign=c]{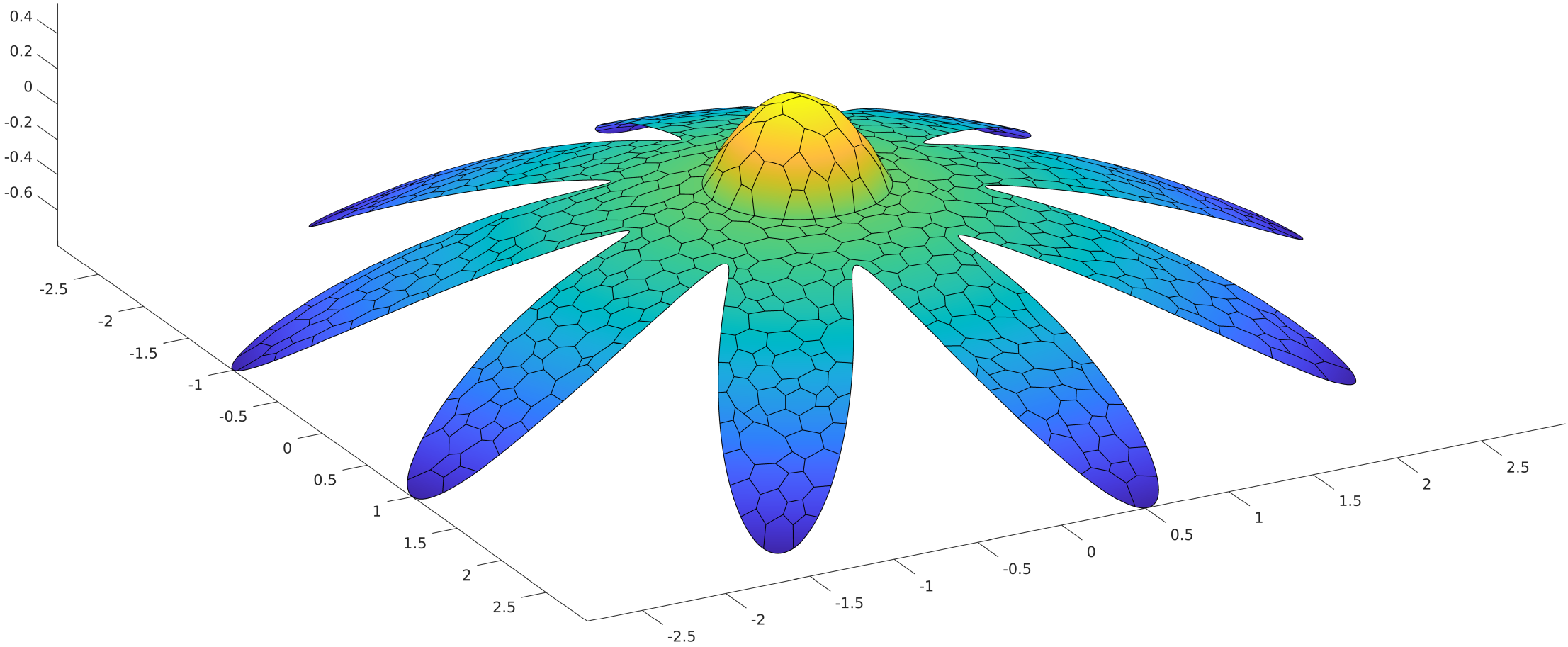}}
    \caption{Test case~\ref{test2d-k1k2-patch}, patch test: (left) mesh employed; (right) solution computed on the chosen mesh.}
    \label{fig:test2d-k1k2-patchtest}
\end{figure}

First, we take $f_1 = f_2 = 1$, so that the solution in each subregion is a paraboloid. We check that the VEM solution $u_h$ satisfies the patch test. Fig.~\ref{fig:test2d-k1k2-patchtest} shows the mesh used and the solution obtained for $k = 2$. Curved edges are displayed in cyan. The error estimators are close to machine-precision ($e_1 = \num{5.833854e-14}$ and $e_0 = \num{4.498005e-14}$), meaning that we recover the exact solution. Since the diffusion is discontinuous across the internal curved edges, the degrees of freedom defining the value on such edges must be stabilized only once (see Remark~\ref{remark-Pcoercivity}). The estimated errors obtained by stabilizing them twice, again for $k = 2$, are: $e_1 = \num{2.359148e-02}$ and $e_0 = \num{3.807228e-02}$.

\subsubsection{Convergence test}\label{test2d-k1k2-convergence}

\begin{table}
  \centering
  \caption{Data for the meshes used for test~\ref{test2d-k1k2-convergence}.}
  \label{tab:test2d-k1k2-convergence}
  \begin{tabular}{
      c
      c
      c
      c
      S[table-format=1.{\roundPrecision}e-1]
      S[table-format=1.{\roundPrecision}e-1]
      S[table-format=1.{\roundPrecision}e-1]
    }
    \toprule
        {Mesh} & {$N_{\mathcal P}$} & {$N_\mathcal{E}$} & {$N_\mathcal{V}$} & {$h$} & {$h^{\text{min}}$} & {$\overline h$}\\
        \midrule
flower$_1$ & 632 & 1873 & 1242 & 4.042816e-01 & 2.559935e-02 & 2.158110e-01\\
flower$_2$ & 2413 & 7197 & 4785 & 1.984354e-01 & 1.543251e-02 & 1.060722e-01\\
flower$_3$ & 9249 & 27666 & 18418 & 9.252595e-02 & 3.840311e-03 & 5.334973e-02\\
flower$_4$ & 35933 & 107640 & 71708 & 5.098972e-02 & 1.989369e-03 & 2.685243e-02\\
flower$_5$ & 142316 & 426633 & 284318 & 2.590563e-02 & 8.957788e-04 & 1.342535e-02\\
        \bottomrule
  \end{tabular}
\end{table}

We take $f_1 = 1, f_2 = 2$, so that the exact solution is not a quadratic polynomial in the external region $\Omega_2$ anymore. We use meshes made of non structured polygonal elements, like the one shown in Fig.~\ref{fig:test2d-k1k2-patchtest}. Geometrical data are listed in Table~\ref{tab:test2d-k1k2-convergence}. Convergence curves for $k = 1, \dots, 4$ in $H^1$ and in $L^2$ are shown in Fig~\ref{fig:test2d-k1k2-convergence}. Slopes are as expected.

\begin{figure}
  \centering
  \begin{tabular}{rl}
    \begin{tikzpicture}[baseline, trim axis left]
      \begin{loglogaxis}
	[ mark size=4pt, grid=major, small,
	  xlabel={Average mesh size $\overline h$},
        legend to name=test2d-k1k2-H1err,
        legend columns=-1,
        xtick=data,
        xticklabel={\pgfmathparse{exp(\tick)}\pgfmathprintnumber{\pgfmathresult}},
        xticklabel style={
        /pgf/number format/.cd,fixed,precision=2 },
	title={$H^1$ errors ($e_1$)}, width=0.5\textwidth ]
        \addplot[color=red,mark=x] coordinates {
          (0.215811,0.976044)
          (0.106072,0.565226)
          (0.0533497,0.270889)
          (0.0268524,0.133702)
          (0.0134254,0.0664513)
        };
        \node [fill=yellow,draw=black,anchor=center,font=\tiny] at (0.1513,0.742755) {$0.77$};
        \node [fill=yellow,draw=black,anchor=center,font=\tiny] at (0.0752258,0.391297) {$1.07$};
        \node [fill=yellow,draw=black,anchor=center,font=\tiny] at (0.0378493,0.190311) {$1.03$};
        \node [fill=yellow,draw=black,anchor=center,font=\tiny] at (0.0189869,0.0942586) {$1.01$};
        \addlegendentry{$k = 1$;}
        \addplot[color=green,mark=+] coordinates {
          (0.215811,0.547868)
          (0.106072,0.156099)
          (0.0533497,0.0399005)
          (0.0268524,0.0100281)
          (0.0134254,0.00250313)
        };
        \node [fill=yellow,draw=black,anchor=center,font=\tiny] at (0.1513,0.292441) {$1.77$};
        \node [fill=yellow,draw=black,anchor=center,font=\tiny] at (0.0752258,0.0789203) {$1.98$};
        \node [fill=yellow,draw=black,anchor=center,font=\tiny] at (0.0378493,0.0200032) {$2.01$};
        \node [fill=yellow,draw=black,anchor=center,font=\tiny] at (0.0189869,0.00501016) {$2.00$};
        \addlegendentry{$k = 2$;}
        \addplot[color=blue,mark=o] coordinates {
          (0.215811,0.237028)
          (0.106072,0.0356538)
          (0.0533497,0.00427865)
          (0.0268524,0.000552234)
          (0.0134254,6.82178e-05)
        };
        \node [fill=yellow,draw=black,anchor=center,font=\tiny] at (0.1513,0.091929) {$2.67$};
        \node [fill=yellow,draw=black,anchor=center,font=\tiny] at (0.0752258,0.0123511) {$3.09$};
        \node [fill=yellow,draw=black,anchor=center,font=\tiny] at (0.0378493,0.00153715) {$2.98$};
        \node [fill=yellow,draw=black,anchor=center,font=\tiny] at (0.0189869,0.000194093) {$3.02$};
        \addlegendentry{$k = 3$;}
        \addplot[color=orange,mark=square] coordinates {
          (0.215811,0.0872111)
          (0.106072,0.00592854)
          (0.0533497,0.000380714)
          (0.0268524,2.45042e-05)
          (0.0134254,1.52066e-06)
        };
        \node [fill=yellow,draw=black,anchor=center,font=\tiny] at (0.1513,0.0227384) {$3.79$};
        \node [fill=yellow,draw=black,anchor=center,font=\tiny] at (0.0752258,0.00150236) {$3.99$};
        \node [fill=yellow,draw=black,anchor=center,font=\tiny] at (0.0378493,9.65874e-05) {$4.00$};
        \node [fill=yellow,draw=black,anchor=center,font=\tiny] at (0.0189869,6.10431e-06) {$4.01$};
        \addlegendentry{$k = 4$}  
      \end{loglogaxis}
    \end{tikzpicture}
    
    &
    
    \begin{tikzpicture}[baseline, trim axis right]
      \begin{loglogaxis}
	[ mark size=4pt, grid=major, small,
	  xlabel={Average mesh size $\overline h$},
              xtick=data,
        xticklabel={\pgfmathparse{exp(\tick)}\pgfmathprintnumber{\pgfmathresult}},
        xticklabel style={
        /pgf/number format/.cd,fixed,precision=2 },
	title={$L^2$ errors ($e_0$)}, width=0.5\textwidth ]
        \addplot[color=red,mark=x] coordinates {
          (0.215811,0.471517)
          (0.106072,0.109698)
          (0.0533497,0.0307913)
          (0.0268524,0.00773271)
          (0.0134254,0.00197072)
        };
        \node [fill=yellow,draw=black,anchor=center,font=\tiny] at (0.1513,0.22743) {$2.05$};
        \node [fill=yellow,draw=black,anchor=center,font=\tiny] at (0.0752258,0.0581184) {$1.85$};
        \node [fill=yellow,draw=black,anchor=center,font=\tiny] at (0.0378493,0.0154305) {$2.01$};
        \node [fill=yellow,draw=black,anchor=center,font=\tiny] at (0.0189869,0.00390372) {$1.97$};
        \addplot[color=green,mark=+] coordinates {
          (0.215811,0.120076)
          (0.106072,0.0155614)
          (0.0533497,0.00166363)
          (0.0268524,0.000201442)
          (0.0134254,2.4609e-05)
        };
        \node [fill=yellow,draw=black,anchor=center,font=\tiny] at (0.1513,0.0432269) {$2.88$};
        \node [fill=yellow,draw=black,anchor=center,font=\tiny] at (0.0752258,0.00508807) {$3.25$};
        \node [fill=yellow,draw=black,anchor=center,font=\tiny] at (0.0378493,0.000578899) {$3.08$};
        \node [fill=yellow,draw=black,anchor=center,font=\tiny] at (0.0189869,7.0408e-05) {$3.03$};
        \addplot[color=blue,mark=o] coordinates {
          (0.215811,0.0421723)
          (0.106072,0.00243765)
          (0.0533497,0.000133943)
          (0.0268524,8.78073e-06)
          (0.0134254,5.32707e-07)
        };
        \node [fill=yellow,draw=black,anchor=center,font=\tiny] at (0.1513,0.0101391) {$4.01$};
        \node [fill=yellow,draw=black,anchor=center,font=\tiny] at (0.0752258,0.000571407) {$4.22$};
        \node [fill=yellow,draw=black,anchor=center,font=\tiny] at (0.0378493,3.42946e-05) {$3.97$};
        \node [fill=yellow,draw=black,anchor=center,font=\tiny] at (0.0189869,2.16277e-06) {$4.04$};
        \addplot[color=orange,mark=square] coordinates {
          (0.215811,0.013579)
          (0.106072,0.000407944)
          (0.0533497,9.94273e-06)
          (0.0268524,3.29039e-07)
          (0.0134254,1.00958e-08)
        };
        \node [fill=yellow,draw=black,anchor=center,font=\tiny] at (0.1513,0.00235361) {$4.93$};
        \node [fill=yellow,draw=black,anchor=center,font=\tiny] at (0.0752258,6.36874e-05) {$5.40$};
        \node [fill=yellow,draw=black,anchor=center,font=\tiny] at (0.0378493,1.80874e-06) {$4.96$};
        \node [fill=yellow,draw=black,anchor=center,font=\tiny] at (0.0189869,5.76359e-08) {$5.03$};
      \end{loglogaxis}
    \end{tikzpicture}
  \end{tabular}
\ref{test2d-k1k2-H1err}
  \caption{Convergence plots for Test~\ref{test2d-k1k2-convergence}.}
  \label{fig:test2d-k1k2-convergence}
\end{figure}
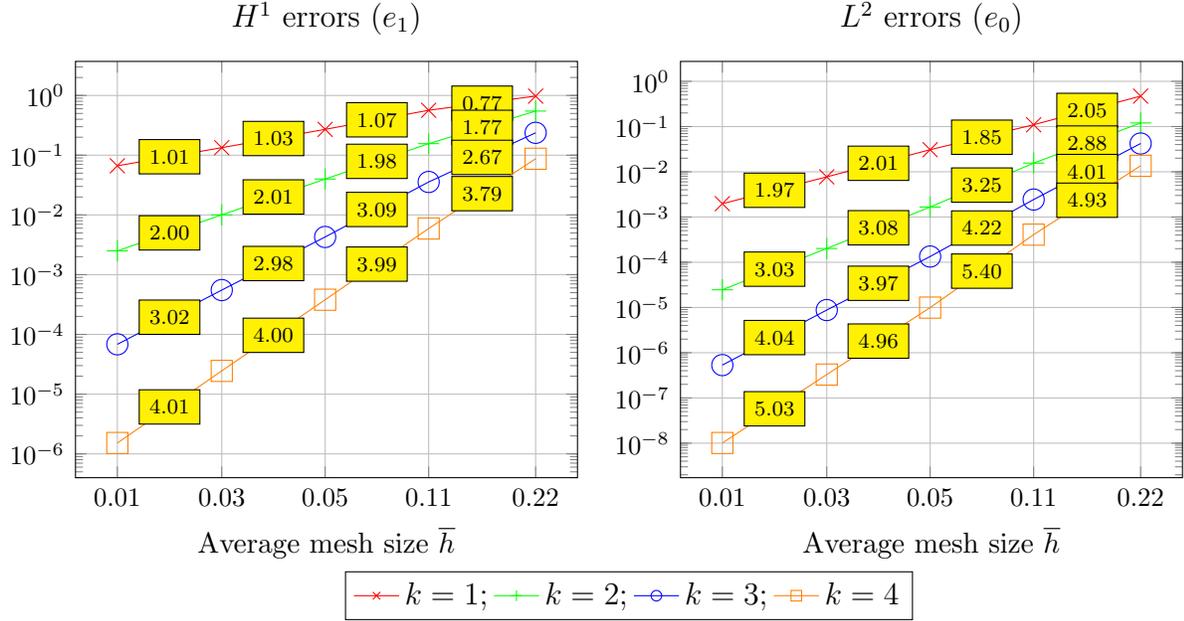

\subsection{Patch and convergence tests with discontinuous diffusion in three dimensions}\label{test3d-ball}

Now we consider a problem with discontinuous diffusion in three dimensions. Let $\Omega\subset\mathbb R^3$ be the unit ball, split into a smaller ball $\Omega_1$ centered at the origin with radius $1/2$ and the complement $\Omega_2 = \Omega\setminus\Omega_1$. We consider problem~\eqref{eq:model} with:
\begin{align*}
  &\kappa_{|\Omega_1} = \kappa_1 = 0.5, \kappa_{|\Omega_2} = \kappa_2 = 10,\\
  &\Gamma_D = \Set{(x,y,z)\in\partial\Omega | z > 0}, \Gamma_N = \Set{(x,y,z)\in\partial\Omega | z < 0},
\end{align*}
and choose right hand side and boundary data $g_D, g_N$ such that the exact solution is:
\begin{align*}
  &u_1(x, y, z) = -\frac{r^2}{\kappa_1} + \frac{f_2-f_1}{\kappa_2}\sin(0.5\pi) + \frac{1}{4}\left(\frac{1}{\kappa_1} - \frac{1}{\kappa_2}\right) + \frac{f_2}{\kappa_2}\quad\text{in }\Omega_1,\\
  &u_2(x, y, z) = -\frac{r^2}{\kappa_2} + \frac{f_2-f_1}{\kappa_2}\sin(\pi r) + \frac{f_2}{\kappa_2}\quad\text{in }\Omega_2,
\end{align*}
with $r = \sqrt{x^2+y^2+z^2}$. Since $\kappa_1\neq\kappa_2$, the solution has a discontinuous gradient across the sphere of radius $1/2$.

\subsubsection{Patch test}\label{test3d-ball-patch}

\begin{figure}
    \centering
    \subfloat{\includegraphics[width=0.5\textwidth, valign=c]{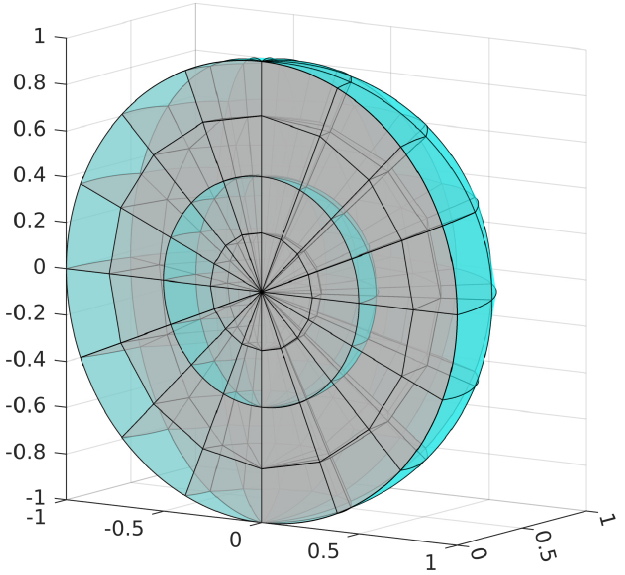}}\quad
    \subfloat{\includegraphics[width=0.4\textwidth, valign=c]{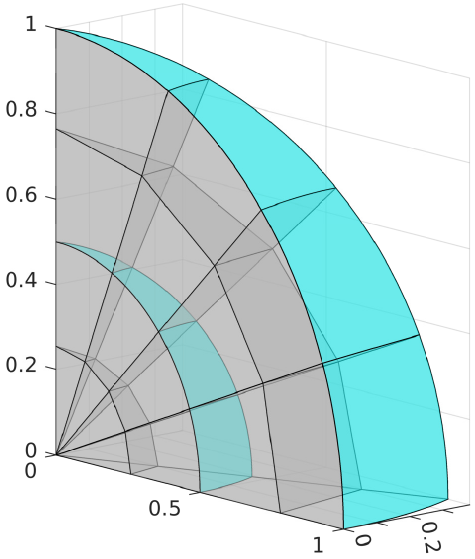}}
    \caption{Sections of the mesh used for test case~\ref{test3d-ball-patch}. Curved faces are highlighted in cyan.}
    \label{fig:test3d-ball-patch}
\end{figure}

By taking $f_1 = f_2 = 1$, we let the exact solution be a quadratic polynomial in each subregion. Fig.~\ref{fig:test3d-ball-patch} shows the mesh used. Curved faces are highlighted in cyan. For $k = 2$, by stabilizing the dofs defining the value on curved faces only once, we get $e_1 = \num{1.748175e-13}$ and $e_0 = \num{9.970228e-14}$, that is we recover the exact solution. Instead, always for $k = 2$, the error estimators obtained by stabilizing twice are $e_1 = \num{5.395074e-02}$ and $e_0 = \num{7.362497e-03}$.

\subsubsection{Convergence test}\label{test3d-ball-convergence}

\begin{table}
  \centering
  \caption{Data for the meshes used for test~\ref{test3d-ball-convergence}.}
  \label{tab:test3d-ball-convergence}
  \begin{tabular}{
      c
      c
      c
      c
      c
      S[table-format=1.{\roundPrecision}e-1]
      S[table-format=1.{\roundPrecision}e-1]
      S[table-format=1.{\roundPrecision}e-1]
    }
    \toprule
        {Mesh} & {$N_{\mathcal P}$} & {$N_{\mathcal F}$} & {$N_\mathcal{E}$} & {$N_\mathcal{V}$} & {$h$} & {$h^{\text{min}}$} & {$\overline h$}\\
        \midrule
ball$_{1}$ & 512 & 1472 & 1416 & 457 & 5.367641e-01 & 3.880588e-02 & 3.582102e-01\\
ball$_{2}$ & 4096 & 12032 & 11792 & 3857 & 2.837977e-01 & 4.826923e-03 & 1.805581e-01\\
ball$_{3}$ & 32768 & 97280 & 96288 & 31777 & 1.471917e-01 & 6.026351e-04 & 9.042229e-02\\
ball$_{4}$ & 262144 & 782336 & 778304 & 258113 & 7.487764e-02 & 7.530667e-05 & 4.522186e-02\\
        \bottomrule
  \end{tabular}
\end{table}

We set $f_1 = 1, f_2 = 2$: the exact solution is not a quadratic polynomial in the external region $\Omega_2$ anymore. We take meshes like the one shown in Fig.~\ref{fig:test3d-ball-patch}. Geometrical data are listed in Table~\ref{tab:test3d-ball-convergence}. Convergence curves for $k = 1, 2, 3$ in $H^1$ and in $L^2$ are shown in Fig~\ref{fig:test3d-ball-convergence}. Slopes are as expected.

\begin{figure}
  \centering
  \begin{tabular}{rl}
    \begin{tikzpicture}[baseline, trim axis left]
      \begin{loglogaxis}
	[ mark size=4pt, grid=major, small,
	  xlabel={Average mesh size $\overline h$},
        legend to name=test3d-ball-H1err,
        legend columns=-1,
        xtick=data,
        xticklabel={\pgfmathparse{exp(\tick)}\pgfmathprintnumber{\pgfmathresult}},
        xticklabel style={
        /pgf/number format/.cd,fixed,precision=2 },
	title={$H^1$ errors ($e_1$)}, width=0.5\textwidth ]
        \addplot[color=red,mark=x] coordinates {
          (0.35821,0.281964)
          (0.180558,0.122962)
          (0.0904223,0.057876)
          (0.0452219,0.0283934)
        };
        \node [fill=yellow,draw=black,anchor=center,font=\tiny] at (0.254318,0.186201) {$1.21$};
        \node [fill=yellow,draw=black,anchor=center,font=\tiny] at (0.127775,0.0843596) {$1.09$};
        \node [fill=yellow,draw=black,anchor=center,font=\tiny] at (0.0639458,0.0405375) {$1.03$};
        \addlegendentry{$k = 1$;}
        \addplot[color=green,mark=+] coordinates {
          (0.35821,0.0110099)
          (0.180558,0.00308614)
          (0.0904223,0.000768059)
          (0.0452219,0.000182219)
        };
        \node [fill=yellow,draw=black,anchor=center,font=\tiny] at (0.254318,0.00582906) {$1.86$};
        \node [fill=yellow,draw=black,anchor=center,font=\tiny] at (0.127775,0.00153959) {$2.01$};
        \node [fill=yellow,draw=black,anchor=center,font=\tiny] at (0.0639458,0.000374106) {$2.08$};
        \addlegendentry{$k = 2$;}
        \addplot[color=blue,mark=o] coordinates {
          (0.35821,0.00228482)
          (0.180558,0.000296661)
          (0.0904223,3.73072e-05)
          (0.0452219,4.65551e-06)
        };
        \node [fill=yellow,draw=black,anchor=center,font=\tiny] at (0.254318,0.000823297) {$2.98$};
        \node [fill=yellow,draw=black,anchor=center,font=\tiny] at (0.127775,0.000105203) {$3.00$};
        \node [fill=yellow,draw=black,anchor=center,font=\tiny] at (0.0639458,1.31789e-05) {$3.00$};
        \addlegendentry{$k = 3$}
    \end{loglogaxis}
    \end{tikzpicture}
    
    &
    
    \begin{tikzpicture}[baseline, trim axis right]
      \begin{loglogaxis}
	[ mark size=4pt, grid=major, small,
	  xlabel={Average mesh size $\overline h$},
              xtick=data,
        xticklabel={\pgfmathparse{exp(\tick)}\pgfmathprintnumber{\pgfmathresult}},
        xticklabel style={
        /pgf/number format/.cd,fixed,precision=2 },
	title={$L^2$ errors ($e_0$)}, width=0.5\textwidth ]
        \addplot[color=red,mark=x] coordinates {
          (0.35821,0.0866812)
          (0.180558,0.0258842)
          (0.0904223,0.00654725)
          (0.0452219,0.00162332)
        };
        \node [fill=yellow,draw=black,anchor=center,font=\tiny] at (0.254318,0.0473674) {$1.76$};
        \node [fill=yellow,draw=black,anchor=center,font=\tiny] at (0.127775,0.0130181) {$1.99$};
        \node [fill=yellow,draw=black,anchor=center,font=\tiny] at (0.0639458,0.0032601) {$2.01$};
        \addlegendentry{$k = 1$}
        \addplot[color=green,mark=+] coordinates {
          (0.35821,0.00142705)
          (0.180558,0.000153126)
          (0.0904223,1.94079e-05)
          (0.0452219,2.42799e-06)
        };
        \node [fill=yellow,draw=black,anchor=center,font=\tiny] at (0.254318,0.000467461) {$3.26$};
        \node [fill=yellow,draw=black,anchor=center,font=\tiny] at (0.127775,5.45148e-05) {$2.99$};
        \node [fill=yellow,draw=black,anchor=center,font=\tiny] at (0.0639458,6.86456e-06) {$3.00$};
        \addlegendentry{$k = 2$}
        \addplot[color=blue,mark=o] coordinates {
          (0.35821,0.000159376)
          (0.180558,1.15406e-05)
          (0.0904223,7.17218e-07)
          (0.0452219,4.25188e-08)
        };
        \node [fill=yellow,draw=black,anchor=center,font=\tiny] at (0.254318,4.28871e-05) {$3.83$};
        \node [fill=yellow,draw=black,anchor=center,font=\tiny] at (0.127775,2.877e-06) {$4.02$};
        \node [fill=yellow,draw=black,anchor=center,font=\tiny] at (0.0639458,1.74629e-07) {$4.08$};
        \addlegendentry{$k = 3$}
              \end{loglogaxis}
    \end{tikzpicture}
  \end{tabular}
\ref{test3d-ball-H1err}
  \caption{Convergence plots for Test~\ref{test3d-ball-convergence}.}
  \label{fig:test3d-ball-convergence}
\end{figure}
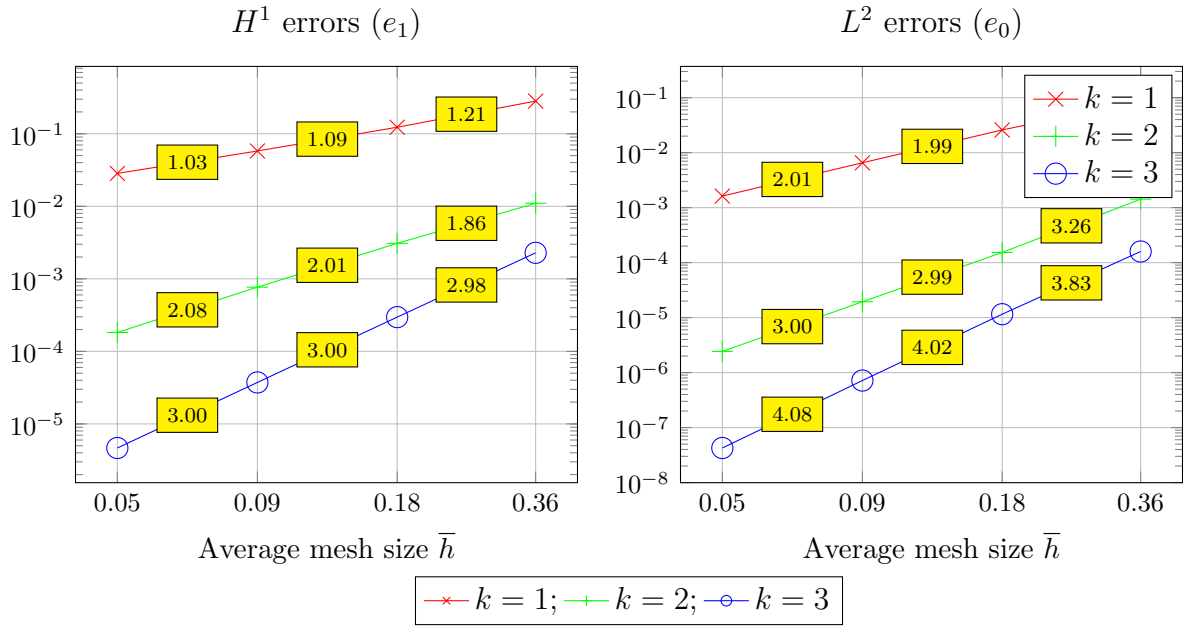

\subsection{Convergence test on a toroidal domain}\label{test3d-torus-convergence}

\begin{figure}
    \centering
    \subfloat{\includegraphics[width=0.5\textwidth, valign=c]{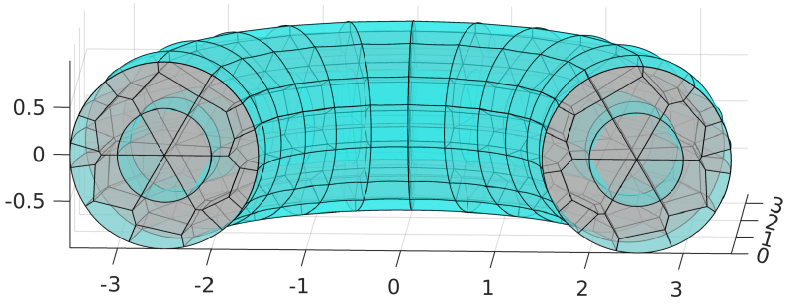}}\quad
    \subfloat{\includegraphics[width=0.4\textwidth, valign=c]{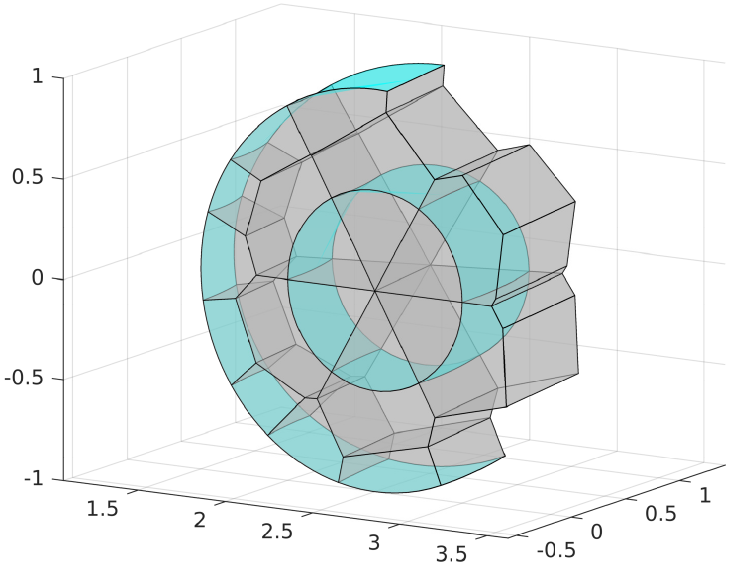}}
    \caption{Sections of the mesh used for test case~\ref{test3d-torus-convergence}. Curved faces are highlighted in cyan.}
    \label{fig:test3d-torus-mesh}
\end{figure}

\begin{table}
  \centering
  \caption{Data for the meshes used for test~\ref{test3d-torus-convergence}.}
  \label{tab:test3d-torus-convergence}
  \begin{tabular}{
      c
      c
      c
      c
      c
      S[table-format=1.{\roundPrecision}e-1]
      S[table-format=1.{\roundPrecision}e-1]
      S[table-format=1.{\roundPrecision}e-1]
    }
    \toprule
        {Mesh} & {$N_{\mathcal P}$} & {$N_{\mathcal F}$} & {$N_\mathcal{E}$} & {$N_\mathcal{V}$} & {$h$} & {$h^{\text{min}}$} & {$\overline h$}\\
        \midrule
torus$_{1}$ & 624 & 2208 & 2568 & 984 & 1.035595e+00 & 6.949816e-03 & 8.537336e-01\\
torus$_{2}$ & 3552 & 13536 & 16464 & 6480 & 5.458442e-01 & 3.540881e-03 & 4.508194e-01\\
torus$_{3}$ & 23424 & 91392 & 112608 & 44640 & 2.729370e-01 & 3.190379e-03 & 2.261588e-01\\
torus$_{4}$ & 154176 & 608064 & 753792 & 299904 & 1.419995e-01 & 1.475534e-03 & 1.162349e-01\\
        \bottomrule
  \end{tabular}
\end{table}

With this test, we investigate the robustness of the proposed method on a non-convex domain with non constant curvature, meshed with non-structured curved polytopes. Let $\Omega\subset\mathbb R^3$ be the region bounded by the torus whose parametric equations are
\begin{align*}
  x &= (R + r \cos v)\cos u, & y &= (R + r\cos v)\sin u, & z &= r\sin v,
\end{align*}
with $u, v \in [0, 2\pi)$, $R = 2.5$, $r = 1$, split into a smaller region $\Omega_1$ bounded by a toroidal surface with $r = 0.5$ and the complement $\Omega\setminus\Omega_1$. We consider problem~\eqref{eq:model} with:
\begin{align*}
  &\kappa_{|\Omega_1} = \kappa_1 = 1, \kappa_{|\Omega_2} = \kappa_2 = 10,\\
  &\Gamma_D = \Set{(x,y,z)\in\partial\Omega | y > 0}, \Gamma_N = \Set{(x,y,z)\in\partial\Omega | y < 0},
\end{align*}
and choose right hand side and boundary data $g_D, g_N$ such that the exact solution is:
\begin{align*}
  &u_1(x, y, z) = \frac{1}{\kappa_1}e^{-\rho^2}\cos u & &\text{in }\Omega_1,\\
  &u_2(x, y, z) = \left(\frac{1}{\kappa_2}e^{-\rho^2} + \left(\frac{1}{\kappa_1}-\frac{1}{\kappa_2}\right)e^{-0.5^2}\right)\cos u & &\text{in }\Omega_2,
\end{align*}
with $\rho = \sqrt{(\sqrt{x^2+y^2} - R)^2 + z^2}$. The solution has a discontinuous gradient across $\partial\Omega_1$. We consider meshes like the one shown in Fig.~\ref{fig:test3d-torus-mesh}. Geometrical data are listed in Table~\ref{tab:test3d-torus-convergence}. Convergence curves for $k = 1, 2, 3$ in $H^1$ and $L^2$ are shown in Fig.~\ref{fig:test3d-torus-convergence}. Slopes are as expected.

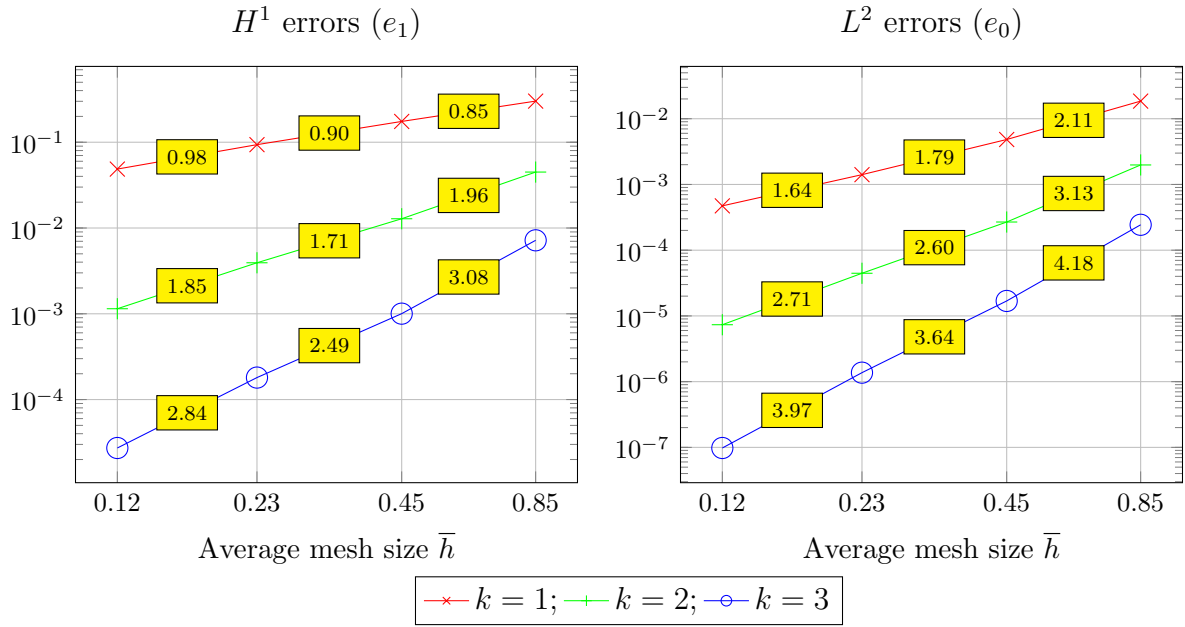
\begin{figure}
  \centering
  \begin{tabular}{rl}
    \begin{tikzpicture}[baseline, trim axis left]
      \begin{loglogaxis}
	[ mark size=4pt, grid=major, small,
	  xlabel={Average mesh size $\overline h$},
        legend to name=test3d-torus-H1err,
        legend columns=-1,
        xtick=data,
        xticklabel={\pgfmathparse{exp(\tick)}\pgfmathprintnumber{\pgfmathresult}},
        xticklabel style={
        /pgf/number format/.cd,fixed,precision=2 },
	title={$H^1$ errors ($e_1$)}, width=0.5\textwidth ]
        \addplot[color=red,mark=x] coordinates {
          (0.853734,0.301617)
          (0.450819,0.174734)
          (0.226159,0.0936936)
          (0.116235,0.0486498)
        };
        \node [fill=yellow,draw=black,anchor=center,font=\tiny] at (0.620387,0.22957) {$0.85$};
        \node [fill=yellow,draw=black,anchor=center,font=\tiny] at (0.319307,0.127951) {$0.90$};
        \node [fill=yellow,draw=black,anchor=center,font=\tiny] at (0.162134,0.0675143) {$0.98$};
        \addlegendentry{$k = 1$;}
        \addplot[color=green,mark=+] coordinates {
          (0.853734,0.0448944)
          (0.450819,0.0128217)
          (0.226159,0.00393364)
          (0.116235,0.00114527)
        };
        \node [fill=yellow,draw=black,anchor=center,font=\tiny] at (0.620387,0.0239922) {$1.96$};
        \node [fill=yellow,draw=black,anchor=center,font=\tiny] at (0.319307,0.00710184) {$1.71$};
        \node [fill=yellow,draw=black,anchor=center,font=\tiny] at (0.162134,0.00212252) {$1.85$};
        \addlegendentry{$k = 2$;}
        \addplot[color=blue,mark=o] coordinates {
          (0.853734,0.00718693)
          (0.450819,0.00100358)
          (0.226159,0.000180463)
          (0.116235,2.72465e-05)
        };
        \node [fill=yellow,draw=black,anchor=center,font=\tiny] at (0.620387,0.00268564) {$3.08$};
        \node [fill=yellow,draw=black,anchor=center,font=\tiny] at (0.319307,0.000425569) {$2.49$};
        \node [fill=yellow,draw=black,anchor=center,font=\tiny] at (0.162134,7.01213e-05) {$2.84$};
        \addlegendentry{$k = 3$}
    \end{loglogaxis}
    \end{tikzpicture}
    
    &
    
    \begin{tikzpicture}[baseline, trim axis right]
      \begin{loglogaxis}
	[ mark size=4pt, grid=major, small,
	  xlabel={Average mesh size $\overline h$},
        xtick=data,
        xticklabel={\pgfmathparse{exp(\tick)}\pgfmathprintnumber{\pgfmathresult}},
        xticklabel style={
        /pgf/number format/.cd,fixed,precision=2 },
	title={$L^2$ errors ($e_0$)}, width=0.5\textwidth ]
        \addplot[color=red,mark=x] coordinates {
          (0.853734,0.0185443)
          (0.450819,0.00483167)
          (0.226159,0.00140959)
          (0.116235,0.000472404)
        };
        \node [fill=yellow,draw=black,anchor=center,font=\tiny] at (0.620387,0.00946573) {$2.11$};
        \node [fill=yellow,draw=black,anchor=center,font=\tiny] at (0.319307,0.00260972) {$1.79$};
        \node [fill=yellow,draw=black,anchor=center,font=\tiny] at (0.162134,0.000816024) {$1.64$};
        \addplot[color=green,mark=+] coordinates {
          (0.853734,0.00198096)
          (0.450819,0.000268336)
          (0.226159,4.45627e-05)
          (0.116235,7.34631e-06)
        };
        \node [fill=yellow,draw=black,anchor=center,font=\tiny] at (0.620387,0.000729085) {$3.13$};
        \node [fill=yellow,draw=black,anchor=center,font=\tiny] at (0.319307,0.000109352) {$2.60$};
        \node [fill=yellow,draw=black,anchor=center,font=\tiny] at (0.162134,1.80934e-05) {$2.71$};
        \addplot[color=blue,mark=o] coordinates {
          (0.853734,0.000243508)
          (0.450819,1.68925e-05)
          (0.226159,1.36982e-06)
          (0.116235,9.78285e-08)
        };
        \node [fill=yellow,draw=black,anchor=center,font=\tiny] at (0.620387,6.41363e-05) {$4.18$};
        \node [fill=yellow,draw=black,anchor=center,font=\tiny] at (0.319307,4.81037e-06) {$3.64$};
        \node [fill=yellow,draw=black,anchor=center,font=\tiny] at (0.162134,3.6607e-07) {$3.97$};
    \end{loglogaxis}
    \end{tikzpicture}
  \end{tabular}
\ref{test3d-torus-H1err}
  \caption{Convergence plots for Test~\ref{test3d-torus-convergence}.}
  \label{fig:test3d-torus-convergence}
\end{figure}

\subsection{Patch and convergence tests on a bubble mesh}\label{test3d-bubble}

\begin{figure}
    \centering
    \subfloat{\includegraphics[width=0.4\textwidth, valign=c]{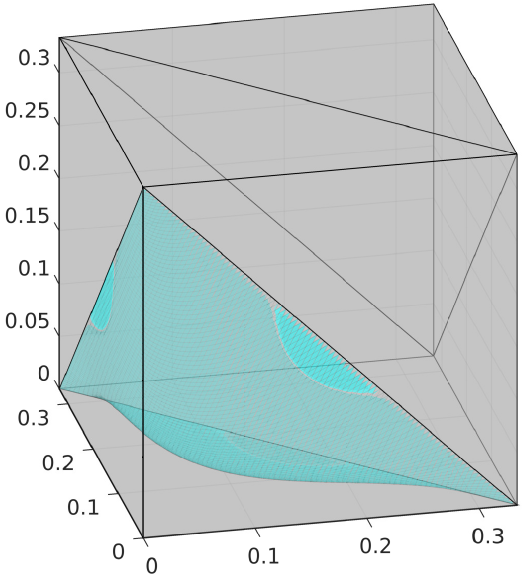}}\quad
    \subfloat{\includegraphics[width=0.5\textwidth, valign=c]{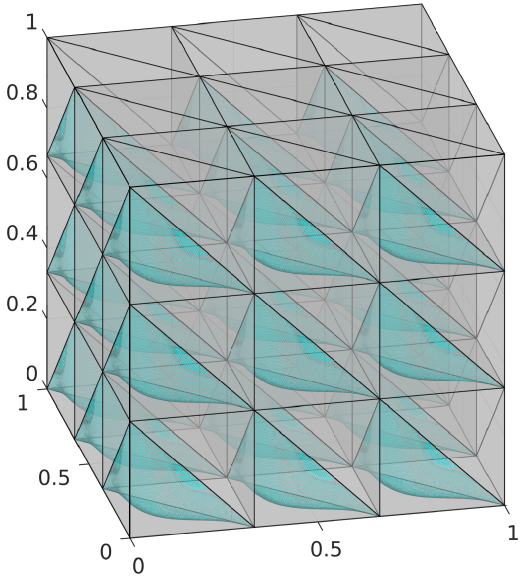}}
    \caption{(Left): a single cube with a ``bubble'' curved face separating two elements. (Right): full mesh. Curved faces are highlighted in cyan.}
    \label{fig:bubble-mesh}
\end{figure}

\noindent
For the next two tests, we consider the problem
\[
-\Delta u = f\quad\text{in }\Omega = (0,1)\times(0,1)\times(0,1),\qquad u = g_D\quad\text{on }\partial\Omega,
\]
with $f$ and $g_D$ chosen case by case. The unit cube is partitioned into small cubes of side $l$, and in turn each small cube is subdivided into two elements sharing a curved face as shown in Fig.~\ref{fig:bubble-mesh}. For each small cube $[x_i,x_i+l]\times [y_i,y_i+l]\times [z_i,z_i+l]$, the curved face is a cubic bubble with respect to the plane $\eta_i$ passing through the points $(x_i+l, y_i, z_i)$, $(x_i, y_i+l, z_i)$, and $(x_i, y_i, z_i+l)$. The distance from the tip of the bubble to the plane $\eta_i$ is proportional to $l^\beta$, with $\beta = 1$ or $\beta = 2$. For $\beta = 1$, all elements are homothetically equivalent, whereas, for $\beta = 2$, when $l$ gets smaller, the elements get flatter. Our aim is to test the robustness of the proposed method with respect to the amount of curvature of a face.

\subsubsection{Patch test}

Right hand side $f$ and boundary datum $g_D$ are chosen such that the exact solution is
\[
u(x,y,z) = -x^3 + x^2y + y^2z - xyz + y^3 - xz^2 - z^3\quad\text{ in }\Omega.
\]
We compute the error estimators in the case $k = 3$ on a mesh obtained by dividing the unit cube into $6\times 6\times 6$ small cubes. For $\beta = 1$, we get $e_1 = \num{1.763723e-11}$ and $e_0 = \num{4.775303e-12}$, whereas, for $\beta = 2$, we get $e_1 = \num{4.664029e-12}$ and $e_0 = \num{1.162185e-12}$. Thus, we recover the exact solution for both values of $\beta$.

\subsubsection{Convergence test}\label{test3d-bubble-convergence}

\begin{table}
  \centering
  \caption{Data for the meshes used for test~\ref{test3d-bubble-convergence}.}
  \label{tab:test3d-bubble-convergence}
  \begin{tabular}{
      c
      c
      c
      c
      c
      S[table-format=1.{\roundPrecision}e-1]
      S[table-format=1.{\roundPrecision}e-1]
      S[table-format=1.{\roundPrecision}e-1]
    }
    \toprule
        {Mesh} & {$N_{\mathcal P}$} & {$N_{\mathcal F}$} & {$N_\mathcal{E}$} & {$N_\mathcal{V}$} & {$h$} & {$h^{\text{min}}$} & {$\overline h$}\\
        \midrule
bubble$_{1}$ & 432 & 1728 & 1638 & 343 & 2.886751e-01 & 1.666667e-01 & 2.621887e-01\\
bubble$_{2}$ & 3456 & 12960 & 11700 & 2197 & 1.443376e-01 & 8.333333e-02 & 1.310943e-01\\
bubble$_{3}$ & 27648 & 100224 & 88200 & 15625 & 7.216878e-02 & 4.166667e-02 & 6.554717e-02\\
bubble$_{4}$ & 221184 & 787968 & 684432 & 117649 & 3.608439e-02 & 2.083333e-02 & 3.277359e-02\\
        \bottomrule
  \end{tabular}
\end{table}

\noindent
We consider the following exact solution
\[
u(x,y,z) = \cos(x+y+z)e^{xyz}\quad\text{ in }\Omega.
\]
Geometrical data for the mesh used are shown in Table~\ref{tab:test3d-bubble-convergence}. In Fig.~\ref{fig:test3d-bubble-convergence-1}, we show convergence curves for $k = 1, 2, 3$ in $H^1$ and $L^2$ for $\beta = 1$, whereas in Fig.~\ref{fig:test3d-bubble-convergence-2} we repeat the same tests for $\beta = 2$. Slopes are as expected.

\begin{figure}
  \centering
  \begin{tabular}{rl}
    \begin{tikzpicture}[baseline, trim axis left]
      \begin{loglogaxis}
	[ mark size=4pt, grid=major, small,
	  xlabel={Average mesh size $\overline h$},
        legend to name=test3d-bubble-H1err-1,
        legend columns=-1,
        xtick=data,
        xticklabel={\pgfmathparse{exp(\tick)}\pgfmathprintnumber{\pgfmathresult}},
        xticklabel style={
        /pgf/number format/.cd,fixed,precision=2 },
	title={$H^1$ errors ($e_1$)}, width=0.5\textwidth ]
\addplot[color=red,mark=x] coordinates {
(0.262189,0.117437)
(0.131094,0.0595101)
(0.0655472,0.0299413)
(0.0327736,0.0150142)
};
\node [fill=yellow,draw=black,anchor=center,font=\tiny] at (0.185395,0.0835984) {$0.98$};
\node [fill=yellow,draw=black,anchor=center,font=\tiny] at (0.0926977,0.0422115) {$0.99$};
\node [fill=yellow,draw=black,anchor=center,font=\tiny] at (0.0463489,0.0212025) {$1.00$};
\addlegendentry{$k = 1$;}
\addplot[color=green,mark=+] coordinates {
(0.262189,0.00872399)
(0.131094,0.00191097)
(0.0655472,0.000446708)
(0.0327736,0.000108084)
};
\node [fill=yellow,draw=black,anchor=center,font=\tiny] at (0.185395,0.00408305) {$2.19$};
\node [fill=yellow,draw=black,anchor=center,font=\tiny] at (0.0926977,0.000923929) {$2.10$};
\node [fill=yellow,draw=black,anchor=center,font=\tiny] at (0.0463489,0.000219731) {$2.05$};
\addlegendentry{$k = 2$;}
\addplot[color=blue,mark=o] coordinates {
(0.262189,0.000545953)
(0.131094,5.89414e-05)
(0.0655472,6.8816e-06)
(0.0327736,8.32844e-07)
};
\node [fill=yellow,draw=black,anchor=center,font=\tiny] at (0.185395,0.000179386) {$3.21$};
\node [fill=yellow,draw=black,anchor=center,font=\tiny] at (0.0926977,2.01398e-05) {$3.10$};
\node [fill=yellow,draw=black,anchor=center,font=\tiny] at (0.0463489,2.39401e-06) {$3.05$};
\addlegendentry{$k = 3$}
    \end{loglogaxis}
    \end{tikzpicture}
    
    &
    
    \begin{tikzpicture}[baseline, trim axis right]
      \begin{loglogaxis}
	[ mark size=4pt, grid=major, small,
	  xlabel={Average mesh size $\overline h$},
        xtick=data,
        xticklabel={\pgfmathparse{exp(\tick)}\pgfmathprintnumber{\pgfmathresult}},
        xticklabel style={
        /pgf/number format/.cd,fixed,precision=2 },
	title={$L^2$ errors ($e_0$)}, width=0.5\textwidth ]
\addplot[color=red,mark=x] coordinates {
(0.262189,0.0104921)
(0.131094,0.00278953)
(0.0655472,0.000732986)
(0.0327736,0.00019926)
};
\node [fill=yellow,draw=black,anchor=center,font=\tiny] at (0.185395,0.00540999) {$1.91$};
\node [fill=yellow,draw=black,anchor=center,font=\tiny] at (0.0926977,0.00142993) {$1.93$};
\node [fill=yellow,draw=black,anchor=center,font=\tiny] at (0.0463489,0.000382171) {$1.88$};
\addplot[color=green,mark=+] coordinates {
(0.262189,0.000600848)
(0.131094,6.41228e-05)
(0.0655472,7.52151e-06)
(0.0327736,9.20429e-07)
};
\node [fill=yellow,draw=black,anchor=center,font=\tiny] at (0.185395,0.000196286) {$3.23$};
\node [fill=yellow,draw=black,anchor=center,font=\tiny] at (0.0926977,2.19613e-05) {$3.09$};
\node [fill=yellow,draw=black,anchor=center,font=\tiny] at (0.0463489,2.63116e-06) {$3.03$};
\addplot[color=blue,mark=o] coordinates {
(0.262189,3.32898e-05)
(0.131094,1.62751e-06)
(0.0655472,9.10853e-08)
(0.0327736,5.41239e-09)
};
\node [fill=yellow,draw=black,anchor=center,font=\tiny] at (0.185395,7.36068e-06) {$4.35$};
\node [fill=yellow,draw=black,anchor=center,font=\tiny] at (0.0926977,3.85023e-07) {$4.16$};
\node [fill=yellow,draw=black,anchor=center,font=\tiny] at (0.0463489,2.22034e-08) {$4.07$};
    \end{loglogaxis}
    \end{tikzpicture}
  \end{tabular}
\ref{test3d-bubble-H1err-1}
  \caption{Convergence plots for Test~\ref{test3d-bubble-convergence}, case $\beta = 1$.}
  \label{fig:test3d-bubble-convergence-1}
\end{figure}

\begin{figure}
  \centering
  \begin{tabular}{rl}
    \begin{tikzpicture}[baseline, trim axis left]
      \begin{loglogaxis}
	[ mark size=4pt, grid=major, small,
	  xlabel={Average mesh size $\overline h$},
        legend to name=test3d-bubble-H1err-2,
        legend columns=-1,
        xtick=data,
        xticklabel={\pgfmathparse{exp(\tick)}\pgfmathprintnumber{\pgfmathresult}},
        xticklabel style={
        /pgf/number format/.cd,fixed,precision=2 },
	title={$H^1$ errors ($e_1$)}, width=0.5\textwidth ]
        \addplot[color=red,mark=x] coordinates {
          (0.262189,0.0631736)
          (0.131094,0.0297087)
          (0.0655472,0.0144235)
          (0.0327736,0.00710876)
        };
        \node [fill=yellow,draw=black,anchor=center,font=\tiny] at (0.185395,0.0433221) {$1.09$};
        \node [fill=yellow,draw=black,anchor=center,font=\tiny] at (0.0926977,0.0207004) {$1.04$};
        \node [fill=yellow,draw=black,anchor=center,font=\tiny] at (0.0463489,0.0101259) {$1.02$};
        \addlegendentry{$k = 1$;}
        \addplot[color=green,mark=+] coordinates {
          (0.262189,0.0073045)
          (0.131094,0.00162955)
          (0.0655472,0.00038319)
          (0.0327736,9.2858e-05)
        };
        \node [fill=yellow,draw=black,anchor=center,font=\tiny] at (0.185395,0.00345008) {$2.16$};
        \node [fill=yellow,draw=black,anchor=center,font=\tiny] at (0.0926977,0.000790208) {$2.09$};
        \node [fill=yellow,draw=black,anchor=center,font=\tiny] at (0.0463489,0.000188633) {$2.04$};
        \addlegendentry{$k = 2$;}
        \addplot[color=blue,mark=o] coordinates {
          (0.262189,0.000365217)
          (0.131094,3.89379e-05)
          (0.0655472,4.48743e-06)
          (0.0327736,5.3813e-07)
        };
        \node [fill=yellow,draw=black,anchor=center,font=\tiny] at (0.185395,0.000119251) {$3.23$};
        \node [fill=yellow,draw=black,anchor=center,font=\tiny] at (0.0926977,1.32186e-05) {$3.12$};
        \node [fill=yellow,draw=black,anchor=center,font=\tiny] at (0.0463489,1.55397e-06) {$3.06$};
        \addlegendentry{$k = 3$}
    \end{loglogaxis}
    \end{tikzpicture}
    
    &
    
    \begin{tikzpicture}[baseline, trim axis right]
      \begin{loglogaxis}
	[ mark size=4pt, grid=major, small,
	  xlabel={Average mesh size $\overline h$},
        xtick=data,
        xticklabel={\pgfmathparse{exp(\tick)}\pgfmathprintnumber{\pgfmathresult}},
        xticklabel style={
        /pgf/number format/.cd,fixed,precision=2 },
	title={$L^2$ errors ($e_0$)}, width=0.5\textwidth ]
        \addplot[color=red,mark=x] coordinates {
          (0.262189,0.00525006)
          (0.131094,0.00122044)
          (0.0655472,0.000288205)
          (0.0327736,6.86082e-05)
        };
        \node [fill=yellow,draw=black,anchor=center,font=\tiny] at (0.185395,0.00253129) {$2.10$};
        \node [fill=yellow,draw=black,anchor=center,font=\tiny] at (0.0926977,0.000593075) {$2.08$};
        \node [fill=yellow,draw=black,anchor=center,font=\tiny] at (0.0463489,0.000140617) {$2.07$};
        \addplot[color=green,mark=+] coordinates {
          (0.262189,0.000625586)
          (0.131094,6.71301e-05)
          (0.0655472,7.76154e-06)
          (0.0327736,9.34927e-07)
        };
        \node [fill=yellow,draw=black,anchor=center,font=\tiny] at (0.185395,0.000204928) {$3.22$};
        \node [fill=yellow,draw=black,anchor=center,font=\tiny] at (0.0926977,2.28262e-05) {$3.11$};
        \node [fill=yellow,draw=black,anchor=center,font=\tiny] at (0.0463489,2.69379e-06) {$3.05$};
        \addplot[color=blue,mark=o] coordinates {
          (0.262189,2.09333e-05)
          (0.131094,1.01546e-06)
          (0.0655472,5.54436e-08)
          (0.0327736,3.23771e-09)
        };
        \node [fill=yellow,draw=black,anchor=center,font=\tiny] at (0.185395,4.61052e-06) {$4.37$};
        \node [fill=yellow,draw=black,anchor=center,font=\tiny] at (0.0926977,2.37278e-07) {$4.19$};
        \node [fill=yellow,draw=black,anchor=center,font=\tiny] at (0.0463489,1.33982e-08) {$4.10$};
    \end{loglogaxis}
    \end{tikzpicture}
  \end{tabular}
\ref{test3d-bubble-H1err-2}
  \caption{Convergence plots for Test~\ref{test3d-bubble-convergence}, case $\beta = 2$.}
  \label{fig:test3d-bubble-convergence-2}
\end{figure}

\bibliographystyle{amsplain}

\bibliography{biblio}







\end{document}